
\documentclass[12pt,a4paper,psamsfonts]{amsart}

\usepackage{fancyhdr}
\usepackage{appendix}
\usepackage{amssymb,amscd,amsxtra,calc}
\usepackage{mathrsfs}
\usepackage{cmmib57}
\usepackage{multirow}
\usepackage[all]{xy}
\usepackage{longtable}
\usepackage[colorlinks,linkcolor=red,anchorcolor=blue,citecolor=blue]{hyperref}

\setlength{\topmargin}{0cm}
\setlength{\oddsidemargin}{0cm}
\setlength{\evensidemargin}{0cm}
\setlength{\marginparwidth}{0cm}
\setlength{\marginparsep}{0cm}

\setlength{\textheight}{\paperheight - 2in -35pt}
\setlength{\textwidth}{\paperwidth - 2in}
\setlength{\headheight}{12.5pt}
\setlength{\headsep}{25pt}
\setlength{\footskip}{30pt}

\pagestyle{headings}

\theoremstyle{plain}
    \newtheorem{thm}{Theorem}[section]

    \newtheorem{lemma}[thm]{Lemma}
    \newtheorem{proposition}[thm]{Proposition}
    \newtheorem{question}[thm]{Question}
    \newtheorem{theorem}[thm]{Theorem}

\theoremstyle{definition}

    \newtheorem{notation}[thm]{Notation}
    \newtheorem*{notation*}{Notation and Terminology}
    \newtheorem{remark}[thm]{Remark}
    \newtheorem*{ack}{Acknowledgments}
\theoremstyle{remark}

\newcommand{\Q}{\mathbb{Q}}

\newcommand{\R}{\mathbb{R}}

\newcommand{\Z}{\mathbb{Z}}

\newcommand{\diag}{\operatorname{diag}}

\newcommand{\NE}{\overline{\operatorname{NE}}}
\newcommand{\Nef}{\operatorname{Nef}}

\newcommand{\NS}{\operatorname{NS}}

\newcommand{\PE}{\operatorname{PE}}

\newcommand{\N}{\operatorname{N}}

\newcommand{\Pic}{\operatorname{Pic}}

\begin{document}

\title[Dynamical rigidity]
{Rigidity of rationally connected smooth projective varieties from dynamical viewpoints}

\address{
School of Mathematical Sciences,  Key Laboratory of MEA (Ministry of Education) Shanghai Key Laboratory of PMMP   East China Normal University, Shanghai 200241, China; and School of Mathematics, Korea Institute For Advanced Study\\ 
85 Hoegiro, Dongdaemungu Seoul 02455, Republic of Korea}
\email{smeng@math.ecnu.edu.cn}

\author{Sheng Meng, Guolei Zhong}

\address
{National University of Singapore,
Singapore 119076, Republic of Singapore
}
\email{zhongguolei@u.nus.edu}
\begin{abstract}
Let $X$ be a rationally connected smooth projective variety of dimension $n$.
We show that $X$ is a toric variety if and only if $X$ admits an int-amplified endomorphism with totally invariant ramification divisor.
We also show that $X\cong (\mathbb{P}^1)^{\times n}$
if and only if $X$ admits a surjective endomorphism $f$ such that the eigenvalues of $f^*|_{\textup{N}^1(X)}$ (without counting multiplicities) are $n$ distinct real numbers  greater than $1$.
\end{abstract}

\subjclass[2010]{
14M25,  
14E30,   
32H50, 
20K30, 
08A35.  
}

\keywords{rationally connected variety,  toric variety,  int-amplified endomorphism, polarized endomorphism, Fano type,  equivariant minimal model program}

\maketitle
\tableofcontents

\section{Introduction}

We work over an algebraically closed field $k$ of characteristic $0$.
Let $X$ be a smooth projective variety which is {\it rationally connected}, i.e., any two general points of $X$ can be connected by a chain of rational curves; see \cite{Cam92} and \cite{KMM92}.
A natural interest is to characterize such varieties in terms of some dynamic assumptions.

Let $f:X\to X$ be a surjective endomorphism.
Then $f^*$ induces an invertible linear map on $\N^1(X):=\NS(X)\otimes_{\Z} \R$ where $\NS(X)$ is the N\'eron-Severi group of $X$.
In this paper, we focus on the case when all the eigenvalues of $f^*|_{\N^1(X)}$ have modulus greater than 1.
Such $f$ is also known to be {\it int-amplified}, i.e., $f^*L-L$ is ample for some ample Cartier divisor $L$; see \cite[Theorem 1.1]{Men20}. 
It is a natural generalization of the {\it $q$-polarized} endomorphism, i.e., $f^*H\sim qH$ for some ample Cartier divisor $H$ and integer $q>1$.

To the best knowledge of the authors, 
the int-amplified assumption is necessary for one to get restrictions on the rigidity of $X$.
Roughly speaking, we want to exclude the effects of automorphisms.
On the one hand, given a non-isomorphic surjective endomorphism $f$ on a variety, it is easy to obtain a new non-isomorphic surjective endomorphism by taking the product  of $f$ and an automorphism of another arbitrary variety,  in which case, one can get little information on this new variety.
In general, more complicated situations rather than the product case may occur; see \cite[Example 10.2]{Men20}.
On the other hand, we are concerned about the periodic points of such non-isomorphic endomorphisms. 
Fakhruddin showed in \cite[Theorem 5.1]{Fak03} that \textit{amplified} endomorphism (i.e., $f^*L-L$ is ample for not necessarily ample $L$) has countable and Zariski dense periodic points.
This property is also called \textit{PCD} (over an uncountable field of characteristic 0) as studied by the first author in \cite{Men23}. 
However, there exist amplified (and hence PCD) automorphisms which can be easily constructed on the  abelian varieties of product type (cf.\,\cite[Theorem 6.2, Example 6.6]{Men23}).

In \cite[Theorem 1.10]{Men20}, 
the first author proved the equivariant minimal model program (MMP) for int-amplified endomorphisms (cf.\,\cite[Definition 2.1]{MZ20a}), 
generalizing an early result for polarized endomorphisms by Zhang and the first author (cf.\,\cite[Theorem 1.8]{MZ18}).
We refer to \cite{MZ18}, \cite{CMZ20}, \cite{Men20}, \cite{MZ20a}, \cite{MZ20b}, \cite{Zho21} for details and further generalizations about equivariant MMP.
In this way, 
Yoshikawa further proved the following result, 
answering partially a conjecture of Broustet and Gongyo (cf.\,\cite[Conjecture 1.2]{BG17}).
It is also the initial point of this paper.

\begin{theorem}[{\cite[Corollary 1.4]{Yos21}}]\label{main-thm-y}
A rationally connected smooth projective variety $X$ is of Fano type if it admits an int-amplified endomorphism  (cf.\,Notation \ref{setup2.1}).
\end{theorem}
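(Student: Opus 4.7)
The plan is to argue by induction on $n = \dim X$ using the equivariant MMP for int-amplified endomorphisms. The base case $n = 1$ is immediate since a rationally connected smooth projective curve is $\PP^1$, which is Fano.

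For the inductive step, I would apply the equivariant MMP of \cite[Theorem 1.10]{Men20} to $(X, f)$, producing a sequence of $f$-equivariant divisorial contractions and flips
\[
X = X_0 \dashrightarrow X_1 \dashrightarrow \cdots \dashrightarrow X_m
\]
terminating at a Mori fiber space $\pi \colon X_m \to Y$. By construction the induced endomorphism $f_m$ descends along $\pi$ to an int-amplified endomorphism $g$ of $Y$, all $X_i$ and $Y$ remain rationally connected, and the $X_i$ retain klt ($\Q$-factorial) singularities throughout.

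I would next establish Fano type at the end of the MMP. If $\dim Y = 0$, then $X_m$ is Fano, hence of Fano type. Otherwise $0 < \dim Y < n$, so by the induction hypothesis applied to $(Y, g)$ there is an effective $\Q$-boundary $\Delta_Y$ making $(Y, \Delta_Y)$ klt with $-(K_Y + \Delta_Y)$ ample. Since the general fibre of $\pi$ is rationally connected Fano and $-K_{X_m}$ is $\pi$-ample, a canonical bundle formula argument (pull back $\Delta_Y$, add a small $\pi$-ample effective $\Q$-divisor together with a general member of a suitable linear system, and absorb the moduli/discriminant contribution) yields a klt boundary $\Delta_m$ on $X_m$ with $-(K_{X_m} + \Delta_m)$ ample, so $X_m$ is of Fano type.

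The last task is to propagate Fano type back from $X_m$ through the MMP to $X$. Flips are isomorphisms in codimension one, so Fano type transfers directly via strict transform of the boundary. For each divisorial contraction $X_i \to X_{i+1}$, one takes the strict transform of the Fano type boundary on $X_{i+1}$ plus a small positive multiple of the exceptional divisor; discrepancy bookkeeping and the relative negativity of $K_{X_i/X_{i+1}}$ yield a klt boundary on $X_i$ whose anti-log-canonical class is big and nef, which can be perturbed to ample. The main obstacle will be precisely this last step of ascending Fano type through divisorial contractions, since it is not a formal consequence of Fano type downstairs; one must control discrepancies to preserve the klt condition and promote big-and-nef to ample. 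I expect to leverage here the equivariance of each MMP step with respect to $f_i$ and, in particular, the abundance of totally invariant effective divisors forced by int-amplification, to supply enough flexibility for the lifted boundary.
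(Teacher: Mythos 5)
This theorem is not proved in the paper at all; it is quoted verbatim from Yoshikawa \cite[Corollary 1.4]{Yos20} and used as a black box in Section 3 and Section 5. So there is no in-paper proof to compare your attempt against, and what follows is an assessment of the attempt on its own terms.

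Your overall architecture — equivariant MMP to a Mori fibre space, Fano type downstairs by induction, canonical bundle formula to get Fano type on the relatively minimal model, then propagate back up the MMP — is a reasonable and recognisable strategy, and is at least in the same spirit as what is known in the literature. However, there are genuine gaps in the execution, not merely the one you flagged.

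First, there is an induction mismatch. You induct on $\dim X$ for \emph{smooth} rationally connected $X$, but the output $Y$ of the MMP is only $\mathbb{Q}$-factorial klt, not smooth. To make the induction close you must strengthen the statement to $\mathbb{Q}$-factorial klt rationally connected varieties (with int-amplified endomorphism, and with rational connectedness preserved along the MMP — which does hold). As written, the inductive hypothesis does not apply to $Y$.

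Second, you claim that Fano type ``transfers directly'' backwards through a flip by taking strict transforms. This is not automatic. For a $K_{X_i}$-flip $X_i\dashrightarrow X_{i+1}$, if $(X_{i+1},\Delta_{i+1})$ is klt log Fano and $\Delta_i$ is the strict transform of $\Delta_{i+1}$, the discrepancies of $(X_i,\Delta_i)$ and $(X_{i+1},\Delta_{i+1})$ at divisors exceptional over both differ by a term governed by the relative sign of $K+\Delta$ on the flipping/flipped curves, which is not crepant for an arbitrary boundary. One must choose the boundary so that the flip is $(K+\Delta)$-crepant (or otherwise control discrepancies), which is precisely the same kind of bookkeeping you correctly identify as the obstacle for divisorial contractions. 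In other words, the flip case is no easier than the divisorial case, and neither is resolved in your sketch.

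Third, the canonical bundle formula step (passing from Fano type of $Y$ to Fano type of $X_m$ along a Mori fibre space) is stated in one sentence but is itself a substantive theorem in the literature. The moduli part in the canonical bundle formula need not be effective, so ``absorb the moduli/discriminant contribution'' is not a formality; the correct invocation here is something like the ascent of Fano type along extremal Fano contractions (cf.\ Prokhorov--Shokurov \cite{PS09} and its descendants), and that reference should be made explicit rather than gestured at.

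Finally, nothing in your argument actually uses the int-amplified hypothesis beyond invoking the equivariant MMP of \cite{Men20}. But Fano type does not ascend unconditionally along arbitrary birational contractions; some positivity supplied by the dynamics is essential. You acknowledge this at the end (``I expect to leverage... the abundance of totally invariant effective divisors''), but that is exactly the missing heart of the proof. As it stands, the argument would equally ``prove'' that any smooth rationally connected variety is of Fano type, which is false, so the dynamics must enter in an essential, not decorative, way.
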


When $X$ is a rationally connected smooth projective surface admitting a non-isomorphic endomorphism, 
it was proved by Nakayama \cite[Theorem 3]{Nak02} that $X$ is then a toric surface, 
answering affirmatively a conjecture proposed by Sato (cf.\,\cite[Conjecture 2]{Nak02}).
Note that every toric variety is of Fano type (cf.\,Notation \ref{setup2.1}); however, 
there exist many non-rational Fano varieties.
Based on Theorem \ref{main-thm-y}, 
we ask the following question which is a higher dimensional analogue of Sato's conjecture; see also \cite[Question 4.4]{Fak03} for the polarized case.
We note that Question \ref{main-que-toric} is known for Fano threefolds (see\,\cite[Theorem 1.4]{MZZ22}) and for Fano fourfolds admitting a conic bundle structure (see\,\cite[Theorem 1.4]{JZ23}).

\begin{question}\label{main-que-toric}
Let $X$ be a rationally connected smooth projective variety admitting an int-amplified endomorphism.
Is $X$ a toric variety?
\end{question}

\begin{remark}[{\textbf{Motivation and Difficulties for Question \ref{main-que-toric}}}]
We note that toric varieties usually have lots of dynamically interesting symmetries (cf.\,\cite{Nak02}), and our Question \ref{main-que-toric} here is sort of a converse direction to it.
In general, given a non-isomorphic surjective endomorphism  on a rationally connected smooth projective variety (or even a Fano manifold) $X$, 
it is very difficult to find a big torus on $X$; hence
a positive answer to Question \ref{main-que-toric} reveals a very deep symmetric essence shared by toric varieties and int-amplified endomorphisms.	
In this paper, 
we shall show two situations from the aspects of geometry and cohomology in which Question \ref{main-que-toric} holds.
\end{remark}

The following question proposed by Zhang and the first author aimed to generalize the results for polarized endomorphisms in \cite[Theorem 2.1]{HN11} and \cite[Corollary 1.4]{MZ19} to the int-amplified case.
\begin{question}[{\cite[Question 10.1]{MZ22}}]\label{Ques_MZ_19}
Let $f:X\to X$ be an int-amplified endomorphism of a rationally connected smooth projective variety.
Suppose there is an $f^{-1}$-invariant reduced divisor $D$ such that $f|_{X\backslash D}:X\backslash D\to X\backslash D$ is \'etale.
Is $(X,D)$ a toric pair? 
\end{question}

However, due to a gap of the slope semistability, Zhang and the first author can only deal with the case when $f^*|_{\N^1(X)}$ has at most two eigenvalues in \cite[Theorem 10.6]{MZ22}. 
We strongly recommend \cite[Section 10]{MZ22} for a detailed explanation.
In this paper, we will mainly focus on overcoming this gap and answer Question \ref{Ques_MZ_19} affirmatively. 

\begin{theorem}\label{thm_int_toric}
Let $X$ be a rationally connected smooth projective variety with $D$ a reduced divisor.
Then $(X, D)$ is a toric pair if and only if
$X$ admits an int-amplified endomorphism $f$ such that $f|_{X\backslash D}:X\backslash D\to X\backslash D$ is \'etale.
\end{theorem}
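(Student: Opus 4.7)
The ``only if'' direction is standard: on a smooth projective toric variety with toric boundary $D$, multiplication by an integer $q\ge 2$ on the cocharacter lattice induces an int-amplified endomorphism \'etale over the torus $X\setminus D$. For the converse I would proceed in three stages.

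\emph{Stage 1 (log Calabi--Yau structure).} Since $f$ is \'etale outside $D$ and $D$ is reduced, a component-by-component comparison of ramification indices gives $R_f=f^*D-D$, so the ramification formula $K_X=f^*K_X+R_f$ becomes $f^*(K_X+D)=K_X+D$. By \cite[Theorem~1.1]{Men20} every eigenvalue of $f^*|_{\N^1(X)_\R}$ has modulus $>1$, so $K_X+D\equiv 0$; since $X$ is rationally connected we have $H^1(X,\SO_X)=0$, so numerical and $\Q$-linear equivalence of $\Q$-Cartier divisors agree and $K_X+D\sim_\Q 0$. By Theorem~\ref{main-thm-y}, $X$ is of Fano type. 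The nonlc locus $\Nlc(X,D)$ is $f^{-1}$-invariant, and an equivariant MMP argument based on \cite[Theorem~1.10]{Men20} together with int-amplification rules it out, so $(X,D)$ is lc.

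\emph{Stage 2 (complexity).} By the Brown--McKernan--Svaldi--Zong complexity criterion, since $-(K_X+D)\equiv 0$ is nef and $(X,D)$ is lc, it suffices to show the complexity $c(X,D):=n+\rho(X)-k$ is non-positive, where $k$ is the number of prime components of $D$; in that case $(X,D)$ is automatically a toric pair with $D$ the toric boundary. After replacing $f$ by a sufficiently high iterate, each component $D_i$ is $f$-invariant, so the subspace $V\subseteq\N^1(X)_\R$ spanned by $[D_1],\ldots,[D_k]$ is $f^*$-stable, and one shows $V=\N^1(X)_\R$ using that $[D]=-[K_X]\in V$ is big together with the eigenvalue constraint on the quotient; this already yields $k\ge\rho(X)$. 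The crux is to upgrade this to $k\ge n+\rho(X)$.

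\emph{Stage 3 (overcoming the slope semistability gap).} The bound $k\ge n+\rho(X)$ would follow from producing $n$ linearly independent logarithmic vector fields in $H^0(X,T_X(-\log D))$; these integrate to a faithful $n$-dimensional torus action on $X$ preserving $D$ and deliver the toric structure. The obstacle, and the precise point at which \cite[Theorem~10.6]{MZ19c} is limited to at most two eigenvalues of $f^*$, is the $\mu$-semistability of $T_X(-\log D)$: no single ample polarisation witnesses it when $f^*|_{\N^1(X)_\R}$ has more than two eigenvalues. I would decompose $\N^1(X)_\R$ into generalised eigenspaces of $f^*$ grouped by modulus, on each of which a polarisation-type slope argument does apply, and patch the pieces together by inductive equivariant MMP contractions so as to bootstrap the one- and two-eigenvalue cases already treated in \cite[Theorem~10.6]{MZ19c}.
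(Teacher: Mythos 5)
Your three-stage outline correctly identifies the skeleton of the argument and, most importantly, the crux: establishing $\mu$-semistability of $\hat\Omega^1_X(\log D)$ (equivalently $T_X(-\log D)$) in order to trivialize it via the vanishing of $c_1$ and $c_2$ (Proposition~\ref{pro10.3}), \cite[Theorem~1.20]{GKP16}, and then \cite[Theorem~4.5]{MZ19} with \cite[Theorem~1.2]{BMSZ18}. Stages 1 and 2 are roughly the content already implicit in Proposition~\ref{pro10.3} and the cited results of \cite{MZ19} and \cite{BMSZ18}. However, your proposed fix for Stage~3 is where the genuine gap sits, and it is not what the paper does.

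Your sketch --- decompose $\N^1(X)_\R$ into generalised eigenspaces of $f^*$ grouped by modulus, run a ``polarisation-type slope argument'' on each piece, and patch via equivariant MMP contractions --- would run into immediate trouble. The generalised eigenspaces (or the modulus-graded pieces) are $f^*$-invariant linear subspaces of $\N^1(X)_\R$, but in general none of them meets the ample cone, so there is no polarisation available on them with which to run any slope argument; $\mu$-semistability needs a nef (or at least movable) class, not an arbitrary $f^*$-eigenvector. Bootstrapping through MMP contractions is also problematic: a contraction $X\to X'$ changes the variety on which the sheaf lives, and a semistability statement on $X'$ does not lift to one on $X$ for $T_X(-\log D)$. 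This is precisely why \cite[Theorem~10.6]{MZ19c} stalled at two eigenvalues, and nothing in your Stage~3 explains how the obstruction is actually removed.

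The paper's Proposition~\ref{pro10.5} circumvents this with a different, self-contained trick that you would not reconstruct from the sketch. Since $X$ is of Fano type, $\Nef(X)$ is a rational polyhedral cone generated by semiample Cartier divisors $D_1,\dots,D_m$, and after replacing $f$ by a power each $D_i$ is an eigendivisor, $f^*D_i\equiv\lambda_i D_i$. Writing a fixed ample $H\equiv\sum a_iD_i$ with $a_i>0$ and expanding $H^{n-1}$, any destabilizing $\SF\subset\hat\Omega^1_X(\log D)$ must satisfy $c_1(\SF)\cdot D_{i_1}\cdots D_{i_{n-1}}>0$ for some monomial in nef eigendivisors. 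Choosing a further eigendivisor $D_l$ with $D_{i_1}\cdots D_{i_{n-1}}\cdot D_l>0$, the projection formula gives $c_1\bigl((f^k)^*\SF\bigr)\cdot D_{i_1}\cdots D_{i_{n-1}}=\lambda_l^{\,k}\,c_1(\SF)\cdot D_{i_1}\cdots D_{i_{n-1}}$, which is unbounded as $k\to\infty$; but $(f^k)^*\SF$ extends to a coherent subsheaf of $\hat\Omega^1_X(\log D)$ (because $f$ is log-\'etale, so $(f^k)^*\hat\Omega^1_X(\log D)\cong\hat\Omega^1_X(\log D)$ away from a small set), contradicting the boundedness of degrees of subsheaves against the fixed nef class $D_{i_1}\cdots D_{i_{n-1}}$. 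No modulus-grading, no MMP bootstrap. You should replace Stage~3 with this argument (or quote Proposition~\ref{pro10.5} directly); as written, Stage~3 does not constitute a proof.
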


In \cite[Theorem 1.11]{Men20}, replacing $f$ by a suitable power, $f^*|_{\N^1(X)}$ can be viewed as a diagonal matrix $\diag[\lambda_1,\cdots, \lambda_{\rho}]$, 
where $\rho:=\rho(X)=\dim_{\R}\N^1(X)$ and $\lambda_i$ are (possibly the same) integers greater than $1$. 
Here, $\rho$ can be arbitrarily large in general, even if $n=\dim(X)$ is fixed.
Nevertheless, the number $r$ of eigenvalues of $f^*|_{\N^1(X)}$ (without counting multiplicities) is bounded by $n$; see Proposition \ref{prop-Di}.
Note that if $r=1$, then $f$ is the usual polarized endomorphism.
For another extremal case when $r=n$, we show the strongest splitting rigidity of $X$.
The key idea is to show that the equivariant MMP of $X$ involves only with (conic bundle type) Fano contractions  of smooth Fano varieties, and then we are able to apply the adjunction formula in the most comfortable way and prove by induction on  the dimension of $X$.

\begin{theorem}\label{thm_rc_product}
Let $X$ be a rationally connected smooth projective variety of dimension $n$.
Then
$$X\cong (\mathbb{P}^1)^{\times n}$$
if and only if $X$ admits a surjective endomorphism $f$ such that  the eigenvalues of $f^*|_{\N^1(X)}$ (without counting multiplicities) are $n$ distinct real numbers greater than 1.
\end{theorem}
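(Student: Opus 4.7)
Proof Proposal:

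The ``only if'' direction is immediate: on $(\mathbb{P}^1)^{\times n}$ take $f := f_1 \times \cdots \times f_n$ with $f_i$ a degree-$q_i$ endomorphism of $\mathbb{P}^1$ and $1 < q_1 < \cdots < q_n$ pairwise distinct integers; then $f^*|_{\N^1(X)} = \diag[q_1, \ldots, q_n]$.

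For the ``if'' direction, the strategy is induction on $n$, with the case $n=1$ trivial. In general, the eigenvalue hypothesis makes $f$ int-amplified, so by Theorem \ref{main-thm-y} the variety $X$ is of Fano type and the equivariant MMP of \cite[Theorem 1.10]{Men20} applies after replacing $f$ by a suitable positive power. Following the plan announced in the introduction, the main structural claim is that \emph{every step of the equivariant MMP on $X$ is a smooth conic-bundle type Fano contraction, i.e., a smooth $\mathbb{P}^1$-bundle over a smooth Fano base}. Granting this, the first such $\mathbb{P}^1$-bundle $\pi: X \to Y$ has smooth rationally connected $Y$ of dimension $n-1$, on which $f$ descends to an endomorphism $g$ whose $g^*|_{\N^1(Y)}$ has $n-1$ distinct eigenvalues $>1$; by induction $Y \cong (\mathbb{P}^1)^{\times(n-1)}$.

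The numerical engine for the structural claim is Proposition \ref{prop-Di}: at every stage $r(X_i) \leq \dim X_i$. For an equivariant Mori fiber contraction $\pi: X_i \to X_{i+1}$ of relative dimension $d$, the inclusion $\pi^*: \N^1(X_{i+1}) \hookrightarrow \N^1(X_i)$ is $f^*$-equivariant of codimension one, so exactly one eigenvalue (counted with multiplicity) is removed. Starting from the extremal situation $r(X_0) = n = \dim X_0$, if the removed eigenvalue had multiplicity $\geq 2$ in $\N^1(X_i)$, then $r(X_{i+1}) = r(X_i) = \dim X_i > \dim X_i - d = \dim X_{i+1}$, contradicting Proposition \ref{prop-Di}. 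Hence the removed eigenvalue has multiplicity $1$, giving $r(X_{i+1}) = r(X_i) - 1$, and Proposition \ref{prop-Di} then forces $d=1$. Iterating yields a tower of $n$ conic bundles down to a point, and in particular $\rho(X) = n$. Divisorial contractions and flips are excluded by the analogous eigenvalue accounting combined with the smoothness required of all intermediate $X_i$ (which is preserved along $\mathbb{P}^1$-bundles but destroyed by a generic divisorial contraction).

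The last step is to show $\pi: X \to Y \cong (\mathbb{P}^1)^{\times(n-1)}$ is a trivial $\mathbb{P}^1$-bundle. Writing $X = \PP(\SV)$ for a rank-two bundle $\SV$ on the toric base $Y$, the $f^*$-eigenvector $L$ corresponding to the ``new'' eigenvalue is $\pi$-relatively ample with $L \cdot F = 1$ on fibers. Using $H^1(Y, \SO_Y) = 0$ any such extension splits as $\SV \cong \SO_Y \oplus \SL$, and $f$-equivariance of $\SV$ (after a suitable iterate) combined with the distinctness of the eigenvalues should force $\SL \cong \SO_Y$, yielding $X \cong \mathbb{P}^1 \times Y \cong (\mathbb{P}^1)^{\times n}$. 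The main obstacle, as I see it, is precisely this final triviality: $\mathbb{P}^1$-bundles on toric bases admit many nontrivial Hirzebruch-type twists, and ruling them all out requires using the $f$-equivariance and the precise eigenvalue configuration in a decisive way. A subsidiary technical difficulty is the clean exclusion of divisorial contractions and flips in the equivariant MMP, where the crude numerical bookkeeping must be supplemented with smoothness-propagation arguments.
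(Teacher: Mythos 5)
Your outline agrees with the paper up to the point of producing, after Propositions \ref{prop-Di}--\ref{prop-fano}, an $f$-equivariant conic bundle $\pi_1: X \to X_1$ with $X_1 \cong (\mathbb{P}^1)^{\times(n-1)}$ by induction, together with a finite map $\psi: X \to X_1 \times Y_1$. But two real gaps appear in the final step.

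First, you silently upgrade ``conic bundle'' to ``smooth $\mathbb{P}^1$-bundle.'' Proposition \ref{prop-fano} only gives a conic bundle: the fibers are conics, possibly degenerate, and the discriminant locus may be nonempty. The remark following the theorem is precisely about this danger: for $n=3$ there is a smooth Fano threefold with $\rho=3$ and $\Nef=\PE^1$ carrying three conic-bundle contractions to $\mathbb{P}^1\times\mathbb{P}^1$, each with nonempty discriminant, and $X\not\cong(\mathbb{P}^1)^{\times 3}$. So ``$X=\PP(\SV)$ for a rank-two bundle $\SV$'' is not available a priori; you would first need to rule out the degenerate fibers, and that is the whole difficulty. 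Second, and relatedly, you assume $L\cdot F=1$ for the relative eigenvector $L$; but $D_i\cdot\ell_i=d$ could in principle be $2$ (as in the cited Mori--Mukai example), and the crux of the argument is to compute $d$. You flagged the $\PP(\SV)$-triviality as the ``main obstacle'' and offered only a heuristic for it, so the proposal as written does not close.

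The paper's route around both problems is a single numerical identity rather than a bundle-theoretic argument. Setting $d:=D_1\cdots D_n=D_i\cdot\ell_i$, one has $-dK_X\sim 2\sum_i D_i$, hence $(-dK_X)^{n-1}\cdot D_1 = 2^{n-1}(n-1)!\,d$; on the other hand adjunction gives $K_{D_1}=K_X|_{D_1}$ and the inductive identification $D_1\cong(\mathbb{P}^1)^{\times(n-1)}$ gives $(-K_{D_1})^{n-1}=2^{n-1}(n-1)!$, so $(-dK_X)^{n-1}\cdot D_1=(-dK_{D_1})^{n-1}=2^{n-1}(n-1)!\,d^{\,n-1}$. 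Comparing yields $d^{n-1}=d$, hence $d=1$ for $n\ge 3$, and then $\deg\psi=D_1\cdot\ell_1=d=1$ forces $\psi$ to be an isomorphism. (The $n=2$ base case is handled separately: $\rho(X)=2$, $\Nef=\PE^1$, $X$ smooth Fano forces $X\cong\mathbb{P}^1\times\mathbb{P}^1$ by surface classification.) This adjunction computation is the decisive idea that your proposal is missing; once you have $d=1$, there is no need to analyze whether $\pi$ is a $\mathbb{P}^1$-bundle or to prove triviality of a projective bundle, since finiteness plus degree one already gives the isomorphism.

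One smaller point on MMP bookkeeping: you need not ``propagate smoothness'' along the MMP or exclude flips via eigenvalue accounting. Proposition \ref{prop-Di}(5) gives $\Nef(X)=\PE^1(X)$, and Lemma \ref{lem-minimal} then says any generically finite morphism out of $X$ is finite; so the contraction of each $K_X$-negative extremal ray is automatically of fiber type, and divisorial contractions and flips simply cannot occur at the first step. Smoothness of the base and the conic-bundle structure then follow from Ando's theorem \cite{And85} once the contraction is shown to be equidimensional of relative dimension one, which is how Proposition \ref{prop-fano} proceeds.
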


\begin{remark}
In the proof of Theorem \ref{thm_rc_product}, we get a finite surjective morphism $$\psi: X\to  (\mathbb{P}^1)^{\times n}$$ 
with $\rho(X)=n$ and all the effective divisors of $X$ being nef.
The remaining main difficulty is to show that $\psi$ is an isomorphism or simply $X\cong (\mathbb{P}^1)^{\times n}$.
When $n=\rho(X)=2$, we have $X\cong \mathbb{P}^1\times \mathbb{P}^1$ by an easy observation of smooth Fano surfaces. 
However, for the higher dimensional cases, this is in general not true without dynamical concerns.
For example, when $n=\rho(X)=3$, there exists a double cover $\psi$ whose branch locus is a divisor of tridegree $(2,2,2)$ (cf.\,\cite[Table 3]{MM81}).
Such $X$ admits three different Fano contractions to $\mathbb{P}^1\times \mathbb{P}^1$ which are  conic bundles with non-empty discriminant locus (i.e., the contraction morphisms are not smooth).
In particular, we have $X\not\cong \mathbb{P}^1\times \mathbb{P}^1\times \mathbb{P}^1$.
\end{remark}

At the end of this section, we propose the following splitting question for the general $1\le r\le n$ which reduces Question \ref{main-que-toric} to the polarized case. Question \ref{main-que-split} is not true in the general settings of smooth projective varieties; see examples in \cite[Section 10]{Men20}.
\begin{question}\label{main-que-split}
Let $f:X\to X$ be a surjective endomorphism of a rationally connected smooth projective variety such that the eigenvalues of $f^*|_{\N^1(X)}$ (without counting multiplicities) are distinct integers $\lambda_1,\cdots, \lambda_{r}$ greater than 1.
Will we have
$$X\cong X_1\times \cdots\times X_{r}$$
where each $X_i$ is a smooth projective variety of Fano type and $f$ splits to $f|_{X_i}$ which is $\lambda_i$-polarized?
\end{question}

\begin{ack}
The authors would like to thank Professor De-Qi Zhang  for many inspiring discussions, and  the anonymous referee  for suggestions to improve the paper. The first author is supported by Science and Technology Commission of Shanghai Municipality (No. 22DZ2229014), a National Natural Science Fund, and a Research Fellowship of KIAS. The second author is supported by a President's Graduate Scholarship of NUS. 
\end{ack}

\section{Preliminary}
\begin{notation}\label{setup2.1}
Let $X$ be a projective variety. 
We use the following notation throughout this paper. 
\begin{longtable}{p{1cm} p{0.5cm} p{12cm}}
$\textup{Pic}(X)$ && the group of Cartier divisors of $X$ modulo linear equivalence $\sim$\\
$\textup{Pic}^\circ(X)$ && the neutral connected component of $\Pic(X)$\\
$\textup{NS}(X)$ & & $\textup{Pic}(X)/\textup{Pic}^\circ(X)$, the N\'eron-Severi group of $X$\\
$\equiv$&& the numerical equivalence for $\mathbb{R}$-Cartier divisors\\
$\equiv_w$ && the weak numerical equivalence for $r$-cycles. 
Two $r$-cycles $C_1$ and $C_2$ are said to be \textit{weakly numerically equivalent} (denoted by $C_1\equiv_w C_2$) if $(C_1-C_2)\cdot L_1\cdots L_r=0$ for all Cartier divisors $L_i$ on $X$; cf.\,\cite[Definition 2.2]{MZ18} and the references therein. \\
$\N^1(X)$ & & $\textup{NS}(X)\otimes_\mathbb{Z}\mathbb{R}$, the space of $\mathbb{R}$-Cartier divisors modulo numerical equivalence $\equiv$\\
$\N_r(X)$ & & the space of $r$-cycles modulo weak numerical equivalence $\equiv_w$\\
$\rho(X)$ && $\dim_{\mathbb{R}}\N^1(X)$, the Picard number of $X$\\
$\kappa(X,D)$  && the Iitaka dimension of a $\mathbb{Q}$-Cartier divisor $D$\\
 $\textup{Nef}(X)$&& the cone of nef classes in $\N^1(X)$\\
 $\textup{PE}^1(X)$&& the cone of pseudo-effective classes in $\N^1(X)$\\
 $\overline{\textup{NE}}(X)$&& the cone of pseudo-effective classes in $\N_1(X)$\\
\end{longtable}
\vspace{-1em}
\begin{itemize}
\item The above cones  are $(f^*)^{\pm 1}$-invariant for any surjective endomorphism $f:X\to X$.
\item A surjective endomorphism $f:X\to X$ is \textit{$q$-polarized} if $f^*H\sim qH$ for some ample Cartier divisor $H$ and integer $q>1$, or equivalently if $f^*B\equiv qB$ for some big $\mathbb{R}$-Cartier divisor $B$ and integer $q>1$ (cf.\,\cite[Proposition 3.6]{MZ18}).
\item A surjective endomorphism $f:X\to X$ is \textit{int-amplified} if $f^*L-L=H$ for some ample Cartier divisors $L$ and $H$, or equivalently if $f^*L-L=H$ for some big $\mathbb{R}$-Cartier divisors $L$ and $H$ (cf.\,\cite[Theorem 1.1]{Men20}).
\item We say that a normal projective variety $X$ is \textit{of Fano type}, if there exists an effective Weil $\mathbb{Q}$-divisor $\Delta$ on $X$ such that the pair $(X,\Delta)$ has at worst klt  singularities and $-(K_X+\Delta)$ is ample (cf.\,\cite[Lemma-Definition 2.6]{PS09}). If $\Delta=0$, we say that $X$ is a \textit{Fano variety}.
\item  A finite surjective morphism is \textit{quasi-\'etale} if it is \'etale in codimension one.
\item We say that a normal variety $X$ is a \textit{toric variety} if $X$ contains an algebraic torus $T=(k^*)^n$ as an (affine) open dense subset such that the natural multiplication action of $T$ on itself extends to an action on the whole variety. In this case, let $D:=X\backslash T$, which is a divisor; the pair $(X,D)$ is said to be a \textit{toric pair}.
\item Let $(X,\Delta)$ be a log pair. Write $\Delta=\sum_i a_iD_i$ with each $a_i>0$ and $D_i$ being distinct irreducible divisors.
Denote by 
$$\left<\Delta\right>:=\lfloor \Delta \rfloor +\lceil 2\Delta \rceil-\lfloor 2\Delta \rfloor=\sum_{i:a_i>1/2}D_i.$$
A \textit{decomposition} of $\Delta$ is an expression of the form $\sum_{i=1}^k a_iS_i\le\Delta$ where $S_i\ge 0$ are $\mathbb{Z}$-divisors and $a_i\ge 0$ for each $i$.
The \textit{complexity} of this decomposition is $n+r-d$, where $r$ is the rank of the vector space spanned by $S_1,S_2,\cdots,S_k$ in the space of Weil $\mathbb{R}$-divisors modulo algebraic equivalence and $d=\sum a_i$.
The \textit{complexity} $c=c(X,\Delta)$ of $(X,\Delta)$ is the infimum of the complexity of any decomposition of $\Delta$ (cf.\,\cite[Definition 1.1]{BMSZ18}).
\end{itemize}
\end{notation}

In what follows, we prepare several preliminary results for the use of our proofs. 
First, the following theorem gives a geometric characterization of toric varieties involving the complexity by Brown, M$^\text{C}$Kernan, Svaldi and Zong.

\begin{theorem}[{\cite[Theorem 1.2]{BMSZ18}}]\label{thm-BMSZ}
Let $X$ be a proper variety of dimension $n$ and let $(X,\Delta)$ be a log canonical pair such that $-(K_X+\Delta)$ is nef.
If $\sum a_iS_i$ is a decomposition of complexity $c$ less than one, then there is a divisor $D$ such that $(X,D)$ is a toric pair, where $D\ge\left<\Delta\right>$ and all but $\lfloor 2c \rfloor$ components of $D$ are elements of the set $\{S_i~|~1\le i\le k\}$.
\end{theorem}

We give  a simple version of Theorem \ref{thm-BMSZ}, which is enough for our application.
\begin{theorem}[{\cite[Remark 4.4 (1)]{MZ19}}]\label{thm-BMSZ2}
Let $X$ be a smooth projective variety of dimension $n$ and let $D=\sum_{i=1}^d D_i$ be a reduced divisor such that $(X,D)$ is a log canonical pair and $K_X+D\equiv 0$.
Suppose the complexity $c(X,D)\le 0$.
Then $(X,D)$ is a toric pair.
\end{theorem}

The following result is well-known due to the cone theorem.
\begin{lemma}[{cf.\,\cite[Theorem 3.7]{KM98}, \cite[Corollary 1.3.2]{BCHM10}}]\label{lem-mds}
Let $X$ be a normal projective variety of Fano type.
Then $\Nef(X)$ is generated by finitely many base point free (extremal) Cartier divisors.
\end{lemma}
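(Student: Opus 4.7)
The plan is to derive the statement from the cone theorem and the base point free theorem for klt pairs, the two pillars already cited in the lemma. By hypothesis there is an effective $\Q$-divisor $\Delta$ on $X$ with $(X,\Delta)$ klt and $-(K_X+\Delta)$ ample. The first step is to apply the cone theorem \cite[Theorem 3.7]{KM98} to $(X,\Delta)$, which yields
\[
\NE(X) \;=\; \NE(X)_{(K_X+\Delta)\ge 0} \;+\; \sum_{i}\R_{\ge 0}[C_i],
\]
with the $[C_i]$ a discrete set of rational extremal rays inside the half-space $\{(K_X+\Delta)<0\}$. Ampleness of $-(K_X+\Delta)$ forces $\NE(X)_{(K_X+\Delta)\ge 0}=\{0\}$, so the discreteness collapses to finiteness, and $\NE(X)$ itself is a rational polyhedral cone with finitely many rational extremal rays.

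Next I would invoke Kleiman's duality: $\Nef(X)$ is the closed cone dual to $\NE(X)$ inside $\N^1(X)$, and the dual of a rational polyhedral cone is rational polyhedral. Hence $\Nef(X)$ is itself rational polyhedral, with finitely many extremal rays each spanned by a rational class. On each such extremal ray I would choose a primitive integral representative $D_j$; these are nef Cartier divisors whose nonnegative $\R$-span is all of $\Nef(X)$.

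The remaining task is to upgrade each $D_j$ from nef to base point free. For this the base point free theorem \cite[Theorem 3.3]{KM98} applies to $(X,\Delta)$: the divisor $D_j-(K_X+\Delta)$ is the sum of a nef class and an ample class, hence is ample and a fortiori nef and big. The theorem then asserts that $D_j$ is semi-ample, so for some positive integer $m_j$ the divisor $m_j D_j$ is base point free. Replacing each $D_j$ by $m_j D_j$ produces the desired finite family of base point free Cartier divisors generating $\Nef(X)$.

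I do not foresee a genuine obstacle; the lemma is really a clean packaging of two well-known theorems. The only subtle point worth checking is the first reduction, that ampleness of $-(K_X+\Delta)$ in the klt setting is enough to remove the $(K_X+\Delta)\ge 0$-summand in the cone decomposition, but this is immediate because every nonzero class in $\NE(X)$ pairs strictly positively with the ample class $-(K_X+\Delta)$. Once this is in place, Kleiman duality and the base point free theorem finish the proof essentially without calculation, consistent with the Mori dream space structure on Fano type varieties recorded in \cite[Corollary 1.3.2]{BCHM10}.
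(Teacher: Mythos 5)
Your proof is correct. The paper gives no written argument and simply cites \cite[Theorem 3.7]{KM98} together with \cite[Corollary 1.3.2]{BCHM10}; the second reference is the statement that a klt log Fano variety is a Mori dream space, whose definition already packages the conclusion that $\Nef(X)$ is rational polyhedral and generated by finitely many semi-ample divisors. You reach the same endpoint more economically: the cone theorem with $-(K_X+\Delta)$ ample forces $\NE(X)_{(K_X+\Delta)\ge 0}=\{0\}$ and, via the $\epsilon H$-refined form \cite[Theorem 3.7(3)]{KM98}, leaves only finitely many extremal rays, so $\NE(X)$ and hence its dual $\Nef(X)$ are rational polyhedral cones; the base-point-free theorem then makes each extremal nef generator semi-ample after scaling. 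This trades the heavy BCHM finiteness machinery for the comparatively classical base-point-free theorem, which is all the lemma actually requires. One small point worth making explicit in your write-up: a rational extremal ray of $\Nef(X)$ is represented by an integral class that is genuinely Cartier, because $H^1(X,\SO_X)=0$ when $X$ is of Fano type, so $\NS(X)\cong\Pic(X)$ and primitive integral classes lift to Cartier divisors.
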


\begin{lemma}\label{lem-minimal}
Let $X$ be a  projective variety with $\textup{Nef}(X)=\textup{PE}^1(X)$. Then any generically finite surjective morphism $\pi:X\to Y$ to a  projective variety $Y$ is finite. 
\end{lemma}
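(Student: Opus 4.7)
The plan is to argue by contradiction: I would assume $\pi$ is not finite and then exhibit an effective prime divisor with strictly negative intersection against a curve contracted by $\pi$. Under the hypothesis $\Nef(X) = \PE^1(X)$ every effective divisor class is nef, so the contradiction will come out immediately.

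First, since $\pi$ is proper, surjective and not finite, some fiber must have positive dimension, and I would pick an irreducible curve $C$ lying inside such a fiber. Then $\pi^{*}H \cdot C = 0$ for every Cartier divisor $H$ on $Y$.

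Next, fixing an ample divisor $H$ on $Y$, the class $\pi^{*}H$ is nef, and it is big because $\pi$ is generically finite and surjective, so that $(\pi^{*}H)^{\dim X} = (\deg \pi) (H^{\dim Y}) > 0$. By Kodaira's lemma I would then write, for some positive integer $m$,
\[
m\pi^{*}H \sim E + A
\]
with $E$ an effective $\mathbb{Q}$-divisor and $A$ an ample $\mathbb{Q}$-divisor. Intersecting both sides with $C$ gives $0 = E \cdot C + A \cdot C$, so $E \cdot C = -A \cdot C < 0$, and hence at least one prime component $E_i$ of $E$ must satisfy $E_i \cdot C < 0$. However, $E_i$ is effective, hence lies in $\PE^1(X)$, and therefore is nef by our hypothesis; this forces $E_i \cdot C \geq 0$, a contradiction.

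The argument is short and I do not anticipate a serious obstacle once Kodaira's lemma is invoked; the only point worth verifying carefully will be the bigness of $\pi^{*}H$, which follows from the fact that $\pi$ is generically finite and surjective (making the top self-intersection positive) combined with the nefness of $\pi^{*}H$.
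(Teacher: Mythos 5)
Your proof is correct. It rests on the same central observation as the paper's proof, namely that $\pi^{*}H$ is big because $\pi$ is generically finite and surjective. The paper then finishes in one line: since $\Nef(X)=\PE^1(X)$, their interiors coincide, and the interior of $\PE^1(X)$ is the big cone while the interior of $\Nef(X)$ is the ample cone (Kleiman), so the big class $\pi^{*}H$ is in fact ample and therefore $\pi$ contracts no curve. You instead unwind this via Kodaira's lemma, decomposing $m\pi^{*}H\sim E+A$ and intersecting with a contracted curve $C$ to produce a prime component $E_i$ with $E_i\cdot C<0$, contradicting the nefness of effective divisors. Both routes are valid; the paper's is slightly shorter since it invokes the cone-interior characterizations directly, whereas yours spells out the same content more elementarily through the effective/ample decomposition. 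One minor remark: your step that some prime component $E_i$ must have negative intersection with $C$ uses that $E$ is effective, which is exactly what Kodaira's lemma provides, so there is no gap.
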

\begin{proof}
Let $\pi:X\to Y$ be a generically finite surjective morphism. 
Fixing an ample Cartier divisor $H$ on $Y$, 
we have that $\pi^*H$ is big and thus ample by assumption. 
So $\pi$ does not contract any curve and hence $\pi$ is finite. 
\end{proof}

\begin{lemma}\label{lem_positive_notreplaced}
Let $f:X\to X$ be a surjective endomorphism of a normal projective variety such that all the eigenvalues of $f^*|_{\N^1(X)}$ are positive real numbers. Suppose that $(f^m)^*D\equiv\lambda^m D$ for some $\lambda>0$ and integer $m\ge 1$. Then $f^*D\equiv\lambda D$.
\end{lemma}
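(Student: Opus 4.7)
The statement is purely linear-algebraic on $V := \N^1(X)_\R$: letting $A := f^*|_V$, we are given that every eigenvalue of $A$ is a positive real number and that $A^m D = \lambda^m D$ in $V$, and we must deduce $AD = \lambda D$. My plan is to decompose $V$ into generalized eigenspaces of $A$ and then use positivity of $\lambda$ to kill the nilpotent contribution.

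First I would decompose $V = \bigoplus_\mu V_\mu$, where $\mu$ ranges over the distinct (positive real) eigenvalues of $A$ and $V_\mu$ is the associated generalized eigenspace; each $V_\mu$ is $A^m$-stable, and the unique generalized eigenvalue of $A^m|_{V_\mu}$ is $\mu^m$. Writing $D = \sum_\mu D_\mu$ accordingly, the hypothesis $A^m D = \lambda^m D$ gives $(A^m - \lambda^m I) D_\mu = 0$ for each $\mu$. For $\mu \neq \lambda$, positivity of $\mu$ and $\lambda$ forces $\mu^m \neq \lambda^m$, so $A^m - \lambda^m I$ is invertible on $V_\mu$ (its only generalized eigenvalue there is $\mu^m-\lambda^m\neq 0$), whence $D_\mu = 0$. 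Hence $D \in V_\lambda$.

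Next, on $V_\lambda$ write $A = \lambda I + N$ with $N$ nilpotent, so by the binomial theorem
$$A^m - \lambda^m I \;=\; N \cdot P, \qquad P \;:=\; \sum_{k=1}^{m} \binom{m}{k}\lambda^{m-k}\, N^{k-1} \;=\; m\lambda^{m-1}\, I + (\text{nilpotent term}).$$
Because $\lambda > 0$, the scalar leading term $m\lambda^{m-1}$ is nonzero, so $P$ is a nonzero scalar plus a nilpotent operator and is therefore invertible. Since $P$ is a polynomial in $N$ it commutes with $N$, and the hypothesis $A^m D = \lambda^m D$ rewrites as $NPD = P(ND) = 0$; invertibility of $P$ then gives $ND = 0$, so $AD = \lambda D$, i.e.\ $f^* D \equiv \lambda D$.

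The only subtlety, and the part that will require the positivity hypothesis to be invoked in full, is that both uses above genuinely need it: without positivity, the equation $\mu^m = \lambda^m$ could have solutions $\mu \neq \lambda$ among the eigenvalues of $A$ (so the generalized eigenspace of $A^m$ for $\lambda^m$ would be strictly larger than $V_\lambda$), and inside $V_\lambda$ the invertibility of $P$ would fail if one allowed $\lambda = 0$. Both obstructions are precluded by the assumption that the eigenvalues of $f^*|_{\N^1(X)}$ are positive real numbers, so no further geometric input is needed.
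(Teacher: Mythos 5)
Your proof is correct but takes a genuinely different route from the paper's. You pass through the generalized eigenspace decomposition of $A:=f^*|_{\N^1(X)}$: positivity of the eigenvalues gives $\mu^m\neq\lambda^m$ for $\mu\neq\lambda$, forcing $D$ into the $\lambda$-generalized eigenspace $V_\lambda$, and there you write $A=\lambda I+N$ and factor $A^m-\lambda^m I=NP$ with $P=m\lambda^{m-1}I+(\text{nilpotent})$ invertible (since $\lambda>0$), whence $ND=0$. The paper instead factors the polynomial $\varphi^m-\lambda^m=\prod_{i=0}^{m-1}(\varphi-\lambda\xi_m^i)$ over $\C$ (with $\xi_m$ a primitive $m$-th root of unity) and argues that if $(\varphi-\lambda)D\neq 0$, then peeling off the commuting factors one at a time produces a nonzero vector $\widetilde D$ with $(\varphi-\lambda\xi_m^j)\widetilde D=0$ for some $1\le j\le m-1$, exhibiting an eigenvector of $\varphi$ for the eigenvalue $\lambda\xi_m^j$, which is not a positive real number; this contradicts the hypothesis. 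Both proofs are purely linear-algebraic and use positivity in the same essential ways; the paper's trick is shorter and avoids invoking the generalized eigenspace decomposition explicitly, while your version is more systematic and makes transparent where each instance of positivity is used.
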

\begin{proof}
We may assume $D\not\equiv 0$ and $m\ge 2$. 
Let $\varphi:=f^*|_{\N^1(X)}$ and we regard $\varphi$ as a matrix. 
Note that $D\in\ker (\varphi^m-\lambda^m)$ and
 $$\varphi^m-\lambda^m=\prod_{i=0}^{m-1}(\varphi-\lambda \cdot \xi_m^i),$$
where $\xi_m$ is a primitive $m$-th root of unity. 
Suppose that  $D\not\in\ker (\varphi-\lambda)$. 
Then there exists $1\le j\le m-1$ such that
$$\widetilde{D}:=\left(\prod_{i=0}^{j-1}(\varphi-\lambda \cdot \xi_m^i)\right)(D)\not\equiv 0 \,\textup{ and }\,(\varphi-\lambda\cdot \xi_m^j)(\widetilde{D})\equiv 0. $$
Note that $\lambda\cdot\xi_m^j$ is  not a positive real number but an eigenvalue of $\varphi$.
So we get a contradiction.
\end{proof}

\begin{remark}\label{rmk-split}
Let $X:=C_1\times\cdots\times C_n$ with each $C_i\cong\mathbb{P}^1$. 
Let $f:X\to X$ be a surjective endomorphism. 
Then replacing $f$ by a power, we have
$$f=f_1\times\cdots\times f_n$$
where each $f_i: C_i\to C_i$ is a surjective endomorphism.
Moreover, if all the eigenvalues of $f^*|_{\N^1(X)}$ are already positive real numbers, then we don't need to replace $f$ by a power above by Lemma \ref{lem_positive_notreplaced}.	
Note that this kind of splitting result holds true when each $C_i$ is a normal projective variety with $H^1(C_i, \mathcal{O}_{C_i})=0$; see \cite[Theorem 4.6]{San20}.
\end{remark}

\section{Proof of Theorem \ref{thm_int_toric}}
In this section, we shall answer Question \ref{Ques_MZ_19} affirmatively. 
Let $X$ be a normal projective variety and $D$ a reduced divisor on $X$.
Let $j:U\hookrightarrow X$  be a smooth open subset of $X$ with $\textup{codim}\,(X\backslash U)\ge 2$ and $D\cap U$ being a normal crossing divisor.
Denote by 
$$\hat{\Omega}_X^1(\textup{log}\,D):=j_*\Omega_U^1(\textup{log}\,D\cap U)$$
where $\Omega_U^1(\textup{log}\,D\cap U)$ is the locally free sheaf of germs of logarithmic $1$-forms over $U$ with poles only along $D\cap U$. 
Note that $\hat{\Omega}_X^1(\textup{log}\,D)$ is a reflexive coherent sheaf on $X$ which is independent of the choice of $U$.

First, with the same notations as above, we recall the following two results  which are borrowed from \cite{MZ19} and \cite{MZ22}, and will be used in the proof of Theorem \ref{thm_int_toric}.
\begin{theorem}[{cf.\,\cite[Theorem 4.5]{MZ19}}]\label{thm-length-free} 
Let $X$ be a normal projective variety of dimension $n$, and $D$ a reduced divisor of $X$.
Then the complexity (cf.\,Notation \ref{setup2.1})
$$c(X,D)\le n+\widetilde{q}(X)-h^0(X,\hat{\Omega}_X^1(\textup{log}\,D))$$
where $\widetilde{q}(X):=q(\widetilde{X})=h^1(\widetilde{X},\mathcal{O}_{\widetilde{X}})$ with $\widetilde{X}$ being a smooth projective model of $X$.
\end{theorem}


\begin{proposition}[{cf.\,\cite[Proposition 10.3]{MZ22}}]\label{pro10.3}
Let $f:X\rightarrow X$ be an int-amplified endomorphism of a normal projective variety $X$ which is  of dimension $n\ge 2$ and smooth in codimension 2. Let $H$ be an ample Cartier divisor  and $D\subseteq X$  a reduced divisor. Suppose that $f^{-1}(D)=D$ and $f|_{X\backslash D}: X\backslash D \rightarrow X\backslash D$ is quasi-\'etale. Then
$$c_1(\hat{\Omega}_X^1(\textup{log}\,D))\cdot H^{n-1}=c_1(\hat{\Omega}_X^1(\textup{log}\,D))^2\cdot H^{n-2}=c_2(\hat{\Omega}_X^1(\textup{log}\,D))\cdot H^{n-2}=0.$$
\end{proposition}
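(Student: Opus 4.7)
The plan is to first upgrade the hypotheses on $f$ to an isomorphism of the reflexive log cotangent sheaf, then translate this into $f^*$-invariance of the Chern classes, and finally combine this with the int-amplified property to force the three intersection numbers to vanish.

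First I would fix a smooth open subset $U\subseteq X$ with $\operatorname{codim}(X\setminus U)\ge 3$ (possible since $X$ is smooth in codimension $2$) on which $D\cap U$ is a simple normal crossing divisor. Since $f|_{X\setminus D}$ is quasi-\'etale and $f^{-1}(D)=D$, at each point of $D\cap U$ the map $f$ is \'etale-locally of the form $z_i\mapsto z_i^{e_i}$ along the branches of $D$, and the identity $d(z^e)/z^e=e\cdot dz/z$ yields a natural isomorphism $f^*\Omega_U^1(\log D\cap U)\cong\Omega_U^1(\log D\cap U)$. Pushing forward via $j$ extends this to an isomorphism of reflexive sheaves $f^{[*]}\hat{\Omega}_X^1(\log D)\cong\hat{\Omega}_X^1(\log D)$ on $X$. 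Since $X$ is smooth in codimension $2$, the Chern classes $c_1,c_2$ of this sheaf are well defined numerical classes, and the isomorphism forces $f^*c_1=c_1$ in $\N^1(X)_{\mathbb{R}}$ and $f^*c_2=c_2$ in $\N^2(X)_{\mathbb{R}}$.

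For the $c_1$ part, I would use the identification $c_1(\hat{\Omega}_X^1(\log D))=K_X+D$ together with the ramification formula $K_X=f^*K_X+R$: since the ramification $R$ is supported on $D$ and $f^*D=D+R$ under our hypotheses, we obtain $f^*(K_X+D)=K_X+D$ in $\N^1(X)$. Int-amplifiedness means every eigenvalue of $f^*|_{\N^1(X)_{\mathbb{R}}}$ has modulus strictly greater than $1$, so $f^*-\mathrm{id}$ is invertible on $\N^1(X)_{\mathbb{R}}$ and therefore $K_X+D\equiv 0$. This immediately yields $c_1\cdot H^{n-1}=0$ and $c_1^2\cdot H^{n-2}=0$.

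The main obstacle is the vanishing $c_2\cdot H^{n-2}=0$. The plan is to exploit the identity
\[
(\deg f)^m(c_2\cdot H^{n-2})=c_2\cdot ((f^m)^*H)^{n-2}
\]
for every $m\ge 1$, obtained by iterating the projection formula using $(f^m)^*c_2=c_2$. The right-hand side is bounded above in absolute value by a constant multiple of $\lambda_{n-2}(f)^m$, where $\lambda_{n-2}(f)$ denotes the spectral radius of $f^*$ acting on $\N^{n-2}(X)_{\mathbb{R}}$, while the left-hand side grows exactly like $\lambda_n(f)^m=(\deg f)^m$. Dividing both sides by $(\deg f)^m$ and letting $m\to\infty$ forces $c_2\cdot H^{n-2}=0$ as soon as $\deg f>\lambda_{n-2}(f)$. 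The hardest step is this strict inequality; I would deduce it from the ampleness of $f^*L-L$ for some ample $L$ (the defining property of int-amplifiedness), which forces $\lambda_n(f)=\deg f>\lambda_{n-1}(f)$, combined with the Khovanskii--Teissier log-concavity $\lambda_k(f)^2\ge\lambda_{k-1}(f)\lambda_{k+1}(f)$, which then propagates the strict inequality to $\lambda_n>\lambda_{n-1}>\lambda_{n-2}$, completing the argument.
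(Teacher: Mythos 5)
The paper gives no proof of this proposition: it is stated with a citation to \cite[Proposition 10.3]{MZ19c} and the argument lives entirely in that reference, so there is no in-paper proof to compare against. Your reconstruction, however, does capture the expected structure (log-cotangent isomorphism $\Rightarrow$ $f^*$-invariance of Chern classes, invertibility of $f^*-\id$ on $\N^1$ for $c_1$, dynamical-degree growth for $c_2$), so let me comment on where it stands.

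The most serious gap is in your opening step. You claim one can choose a smooth open $U\subseteq X$ with $\operatorname{codim}(X\setminus U)\ge 3$ on which $D\cap U$ is simple normal crossing, and you justify this parenthetically with ``possible since $X$ is smooth in codimension $2$.'' That justification only handles the smoothness of $X$; it does \emph{not} give SNC-ness of $D$ up to codimension $3$. A reduced divisor on a smooth ambient variety can very well fail to be normal crossing along a codimension-$2$ locus of $X$ (a cuspidal edge, a tacnodal locus, three branches meeting non-transversally, etc.). Establishing that under the dynamical hypotheses --- $f^{-1}(D)=D$, $f|_{X\setminus D}$ quasi-\'etale, $f$ int-amplified --- the non-log-smooth locus of $(X,D)$ has codimension $\ge 3$ is exactly the content of \cite[Proposition 10.2]{MZ19c}, which the present paper explicitly cites for the same purpose in the proof of its Proposition \ref{pro10.5}. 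Without this input, $c_2(\hat{\Omega}_X^1(\log D))$ is not a well-defined numerical class on $X$ and the intersection number $c_2\cdot H^{n-2}$ is not even well-posed; your local description ``$z_i\mapsto z_i^{e_i}$ along the branches'' also presupposes SNC-ness at the point. This is a real missing lemma, not just a citation gap.

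Two smaller remarks. First, your $c_1$ vanishing invokes $f^*-\id$ being invertible on $\N^1(X)_\R$ applied to the class of $K_X+D$; in the generality stated (only ``normal projective''), $K_X+D$ is a priori a Weil class, and one should either note why it is $\R$-Cartier in this setting or run the same growth-comparison argument you use for $c_2$ (with $H^{n-1}$ in place of $H^{n-2}$), which avoids the issue entirely and treats all three quantities uniformly. Second, your chain deducing $\deg f>\lambda_{n-1}(f)$ is stated a bit loosely: the clean route is to note that $f$ int-amplified means all eigenvalues of $f^*|_{\N^1}$ have modulus $>1$ (cf.\ \cite[Theorem 1.1]{Men20}), and since $f_*|_{\N^1}=(\deg f)\,(f^*|_{\N^1})^{-1}$, the spectral radius of $f_*|_{\N^1}$, which computes $\lambda_{n-1}(f)$, is strictly less than $\deg f$; log-concavity then propagates $\lambda_{n-2}<\lambda_{n-1}<\lambda_n$ as you say. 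Log-concavity together with $\lambda_1>1$ alone would \emph{not} suffice, since the ratios $\lambda_{k}/\lambda_{k-1}$ are decreasing in $k$, so the favorable inequality must be established at the top, not the bottom, of the chain.
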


We follow the idea of \cite[Proposition 10.5]{MZ22} to prove the next proposition. 
\begin{proposition}\label{pro10.5}
Let $f:X\rightarrow X$ be an int-amplified endomorphism of a normal projective variety, which is of dimension $n$, of Fano type and smooth in codimension two. Let $H$ be an ample Cartier divisor  and $D\subseteq X$  a reduced divisor. Suppose that $f^{-1}(D)=D$ and $f|_{X\backslash D}: X\backslash D \rightarrow X\backslash D$ is quasi-\'etale. Then	$\hat{\Omega}_X^1(\textup{log}\,D)$ is $H$-slope semistable.
\end{proposition}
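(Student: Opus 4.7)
The plan is to argue by contradiction. Set $\mathcal{E}:=\hat{\Omega}_X^1(\textup{log}\,D)$. First I would establish the crucial isomorphism $f^*\mathcal{E}\cong\mathcal{E}$ of reflexive sheaves: on the codimension-$\ge 2$ open $U\subseteq X$ where $X$ is smooth, $D\cap U$ is simple normal crossing and $f|_U$ is \'etale, functoriality of logarithmic differentials gives $f^*\Omega_U^1(\textup{log}\,D\cap U)\cong\Omega_U^1(\textup{log}\,D\cap U)$; the isomorphism then extends by reflexivity since $X$ is smooth in codimension two. By Proposition \ref{pro10.3}, $\mu_H(\mathcal{E})=0$, so it suffices to rule out a saturated subsheaf with strictly positive slope.

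Suppose $\mathcal{E}$ is not $H$-slope semistable, and let $\mathcal{F}\subset\mathcal{E}$ be the maximal destabilizing (saturated) subsheaf of the Harder--Narasimhan filtration, so $\mu_H(\mathcal{F})>0$. For $m\ge 0$, set $\mathcal{F}_m:=(f^m)^*\mathcal{F}\hookrightarrow(f^m)^*\mathcal{E}\cong\mathcal{E}$ (reflexive pullback). Since finite pullback in characteristic zero preserves HN filtrations up to rescaling of the polarization, $\mathcal{F}_m$ is the maximal destabilizing subsheaf of $\mathcal{E}$ with respect to $(f^m)^*H$, and $\mu_{(f^m)^*H}(\mathcal{F}_m)=\deg(f^m)\,\mu_H(\mathcal{F})>0$. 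If one can show that $(f^k)^*\mathcal{F}\cong\mathcal{F}$ as subsheaves of $\mathcal{E}$ for some $k\ge 1$, then $c_1(\mathcal{F})$ would be $(f^k)^*$-fixed in $\N^1(X)$; but every eigenvalue of $(f^k)^*|_{\N^1(X)}$ has modulus strictly greater than $1$ by the int-amplified hypothesis, forcing $c_1(\mathcal{F})\equiv 0$ and hence $\mu_H(\mathcal{F})=0$, the desired contradiction.

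The main obstacle, and presumably the source of the \emph{slope semistability gap} mentioned in the introduction, is establishing the existence of such $k$. I would exploit that $X$ is of Fano type: by Lemma \ref{lem-mds}, $\Nef(X)$ is rational polyhedral, and consequently $\Amp(X)$ admits only finitely many stability chambers governing the HN filtration of $\mathcal{E}$. The orbit $\{(f^m)^*H\}_{m\ge 0}$ then visits finitely many chambers, so the pigeonhole principle yields $m_1<m_2$ with $(f^{m_1})^*H$ and $(f^{m_2})^*H$ in the same chamber, on which the HN filtration is constant; this gives $(f^{m_1})^*\mathcal{F}=(f^{m_2})^*\mathcal{F}$, equivalently $(f^{m_2-m_1})^*\mathcal{G}\cong\mathcal{G}$ for $\mathcal{G}:=(f^{m_1})^*\mathcal{F}$, and the eigenvalue argument applied to $\mathcal{G}$ with polarization $(f^{m_1})^*H$ closes the contradiction. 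A more explicit variant bypasses the chamber count: after replacing $f$ by a power one has $f^*|_{\N^1(X)}$ diagonalizable with integer eigenvalues $\lambda_1>\cdots>\lambda_r>1$, and decomposing $c_1(\mathcal{F})=\sum_i c_i$ into eigenspaces gives $c_1(\mathcal{F}_m)=\sum_i\lambda_i^m c_i$; the growth $\mu_H(\mathcal{F}_m)\sim\lambda_{i^*}^m(c_{i^*}\cdot H^{n-1})/\textup{rk}(\mathcal{F})$ contradicts Grothendieck boundedness for slopes of subsheaves of $\mathcal{E}$ as soon as $c_{i^*}\cdot H^{n-1}>0$, where $i^*$ is the largest index with $c_{i^*}\cdot H^{n-1}\ne 0$. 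The opposite sign case is the most delicate point, to be handled by dualizing to $T_X(-\textup{log}\,D)$ and running the analogous argument on the quotients $\mathcal{E}/\mathcal{F}_m$, or by replacing $H$ with a nef Perron eigenvector of $f^*$ in $\Nef(X)$ to engineer the required positivity.
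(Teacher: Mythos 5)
Your overall framework matches the paper's: argue by contradiction, take the maximal destabilizing subsheaf $\mathcal{F}\subset\hat{\Omega}_X^1(\log D)$, pull back by iterates $f^m$ using the local identification $f^*\hat{\Omega}_X^1(\log D)\cong\hat{\Omega}_X^1(\log D)$ on a codimension-$\ge 2$ open set, and derive a contradiction from the growth of first Chern classes against Grothendieck-type boundedness. But the mechanism you propose for closing the contradiction has a genuine gap, and it is exactly the \emph{sign issue} you flag at the end. Decomposing $c_1(\mathcal{F})=\sum_i c_i$ into $f^*$-eigenspaces and pairing with $H^{n-1}$ gives $c_1(\mathcal{F}_m)\cdot H^{n-1}=\sum_i\lambda_i^m(c_i\cdot H^{n-1})$, and if the dominant nonzero component $c_{i^*}\cdot H^{n-1}$ is negative this quantity tends to $-\infty$, which does not contradict any upper bound. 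Your proposed fixes do not obviously repair this: a ``nef Perron eigenvector'' $P$ typically satisfies $P^{n-1}\equiv_w 0$ on a Fano-type variety whose $f^*$ has several eigenvalues (cf.\,Proposition \ref{prop-Di}(3), where $D_i^2\equiv_w 0$), so $\mu_P$ degenerates; and dualizing to $T_X(-\log D)$ merely transposes the sign problem onto the quotients. Your alternative chamber-pigeonhole route is also not established: the rational polyhedrality of $\Nef(X)$ does not by itself yield a \emph{finite} wall-and-chamber decomposition of $\Amp(X)$ for HN filtrations of a fixed sheaf, and the orbit $\{(f^m)^*H\}$ escapes to infinity, so the pigeonhole step needs a finiteness statement you have not justified.

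The paper's proof sidesteps the sign issue with a different decomposition: rather than decomposing $c_1(\mathcal{F})$ into eigenspaces, it uses that $X$ is of Fano type to write $H\equiv\sum a_i D_i$ with $a_i>0$ and $D_1,\dots,D_k$ nef divisors generating the (rational polyhedral) nef cone, each an $f^*$-eigenvector with eigenvalue $\lambda_i$ after passing to a power of $f$. Expanding $H^{n-1}$ as a positive combination of monomials $D_{i_1}\cdots D_{i_{n-1}}$ and using $c_1(\mathcal{F})\cdot H^{n-1}>0$, one can \emph{choose} a single monomial with $c_1(\mathcal{F})\cdot D_{i_1}\cdots D_{i_{n-1}}>0$ — positivity is built in. This monomial is a curve class that is an $f^*$-eigenvector, and the projection formula shows $c_1((f^k)^*\mathcal{F})\cdot D_{i_1}\cdots D_{i_{n-1}}=\lambda_l^k\,c_1(\mathcal{F})\cdot D_{i_1}\cdots D_{i_{n-1}}$ for an eigenvalue $\lambda_l>1$ complementary to $\lambda_{i_1},\dots,\lambda_{i_{n-1}}$. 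This grows unboundedly and contradicts the boundedness of $\sup\{c_1(\mathcal{G})\cdot D_{i_1}\cdots D_{i_{n-1}}:\mathcal{G}\subseteq\hat{\Omega}_X^1(\log D)\}$, which holds because the $D_{i_j}$ are nef. In short, the missing idea is to positively decompose the \emph{polarization} $H$ into eigendivisors and pick the right monomial summand of $H^{n-1}$, rather than to decompose $c_1(\mathcal{F})$ into eigencomponents of indeterminate sign. This is precisely where the Fano-type hypothesis (via Lemma \ref{lem-mds}) enters, and it is what closes the ``slope semistability gap'' your proposal leaves open.
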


\begin{proof}
By Lemma \ref{lem-mds}, $\Nef(X)$ is generated by finitely many nef divisors.
Replacing $f$ by a power, we may assume $f^*$ fixes each extremal ray of $\Nef(X)$.
Then, there exist nef divisors $D_1,\cdots,D_k$ on $X$ such that $f^*D_i\equiv \lambda_iD_i$ and $H\equiv \sum\limits_{i=1}^k a_iD_i$ with each $a_i>0$. 
		
	Suppose the contrary that $\hat{\Omega}_X^1(\textup{log}\,D)$ is not $H$-slope semistable. Let $\mathcal{F}\subseteq \hat{\Omega}_X^1(\textup{log}\,D)$ be the maximal destabilizing subsheaf with respect to $H$ such that 
	$$\mu_H(\mathcal{F}):=\frac{c_1(\mathcal{F})\cdot H^{n-1}}{\textup{rank}\,\mathcal{F}}>\mu_H(\hat{\Omega}_X^1(\textup{log}\,D))=0.$$
Note that the last equality is due to Proposition \ref{pro10.3}. Since $H\equiv\sum a_iD_i$ with each $a_i>0$, there exists a summand  $D_{i_1}\cdots D_{i_{n-1}}$ of $H^{n-1}$ such that
$$c_1(\mathcal{F})\cdot D_{i_1}\cdots D_{i_{n-1}}>0.$$  
Then $ D_{i_1}\cdots D_{i_{n-1}}\not\equiv_w 0$ and there exists a nef Cartier divisor $D_l$ such that
$$f^*D_l=\lambda_l D_l \text{ and } D_{i_1}\cdots D_{i_{n-1}}\cdot D_l>0.$$ 
So
$\deg f=\lambda_{i_1}\cdots \lambda_{i_{n-1}}\cdot \lambda_l$ by the projection formula. 
Since all the $D_{i_j}$ are nef, we have
$$s=\textup{sup}\,\{c_1(\mathcal{F})\cdot D_{i_1}\cdots D_{i_{n-1}}~|~\mathcal{F}\subseteq \hat{\Omega}_X^1(\textup{log}\,D)\}<\infty.$$
Note that $\lambda_l>1$ (cf.\,\cite[Theorem 1.1]{Men20}). 
Then for some $k\gg 1$ and $g:=f^k$, we get the following inequality by the projection formula
$$c_1(g^*\mathcal{F})\cdot D_{i_1}\cdots D_{i_{n-1}}
=\lambda_l^k\cdot c_1(\mathcal{F})\cdot D_{i_1}\cdots D_{i_{n-1}}>s.$$

Let $U$ be a smooth open subset in $X$ such that $\textup{codim}(X\backslash U)\ge 3$ and $D\cap U$ is a normal crossing divisor (cf.\,\cite[Proposition 10.2]{MZ22}). 
Let $j:g^{-1}(U)\hookrightarrow X$ be the inclusion map and $\mathcal{G}:=j_*((g^*\mathcal{F})|_{g^{-1}(U)})$. 
Then we have
$$c_1(\mathcal{G})\cdot D_{i_1}\cdots D_{i_{n-1}}=c_1(g^*\mathcal{F})\cdot D_{i_1}\cdots D_{i_{n-1}}>s.$$ Note that $(g^*\mathcal{F})|_{g^{-1}(U)}\subseteq (g^*\hat{\Omega}_X^1(\textup{log}\,D))|_{g^{-1}(U)}\cong \hat{\Omega}_X^1(\textup{log}\,D)|_{g^{-1}(U)}$, the latter of which is a locally free sheaf.
Since $\textup{codim}(X\backslash g^{-1}(U))\ge 2$ and $j_*$ is left exact, $\mathcal{G}$ is a coherent subsheaf of $\hat{\Omega}_X^1(\textup{log}\,D)$. 
So we get a contradiction.
\end{proof}

\begin{proof}[Proof of Theorem \ref{thm_int_toric}]
Let $n=\dim (X)$. Suppose $(X,D)$ is a toric pair and denote by $T:=X\backslash D\cong (k^*)^n$ the big torus. 
Then the power map 
$$T\to  T \text{ via }  (x_1,\cdots,x_n)\mapsto (x_1^q,\cdots,x_n^q)$$
extends to a surjective endomorphism $f:X\to X$; see \cite[Lemma 4]{Nak02}.
By the construction, the morphism $f$ sends any divisor $D$ to $qD$ via the pull-back; hence $f$ is $q$-polarized.

For another direction, $X$ first is of Fano type by Theorem \ref{main-thm-y}. 
If $\dim (X)=1$, then $X\cong\mathbb{P}^1$ and $D$ is a divisor of two distinct points. 
Assume that $n:=\dim (X)\ge 2$.
By Propositions \ref{pro10.3}, \ref{pro10.5} and \cite[Theorem 1.20]{GKP16}, the reflexive sheaf of germs of logarithmic 1-forms $\hat{\Omega}_X^1(\textup{log}\,D)$ is  free  of rank $n$ since $X$ is simply connected. 
In particular, $h^0(X,\hat{\Omega}_X^1(\textup{log}\,D))=n$.
Now, we compute the complexity $c(X,D)$ of the pair $(X,D)$.
By Theorem \ref{thm-length-free}, 
$$c(X,D)\le n+\widetilde{q}(X)-h^0(X,\hat{\Omega}_X^1(\textup{log}\,D)),$$
where $\widetilde{q}(X)$ is the irregularity of a smooth projective model of $X$. 
Since $X$ is smooth and rationally connected, $\widetilde{q}(X)=q(X)=0$ (cf.\,\cite[Corollary 4.18]{Deb01}).
Therefore, $$c(X,D)\le n+0-n=0.$$
Since $D$ is $f^{-1}$-invariant with $f$ being an int-amplified endomorphism, it follows from \cite[Theorem 1.4]{BH14} and \cite[Lemma 3.11]{Men20} that $(X,D)$ is a log canonical pair by noticing that the non-lc center of $(X,D)$ is $f^{-1}$-invariant and hence empty.
By the ramification divisor formula, since $f|_{X\backslash D}$ is \'etale, we have 
$$K_X+D=f^*(K_X+D).$$
So $K_X+D\equiv 0$ by \cite[Theorem 1.1]{Men20}.
Finally, applying Theorem \ref{thm-BMSZ2} to the pair $(X,D)$ with all the assumptions therein verified, we see that $(X,D)$ is a toric pair, and our theorem is thus proved.
\end{proof}

\section{Dynamics with Hodge index theorem}
We begin with the following type of Hodge index theorem which is known to experts.
\begin{lemma}[{cf.\,\cite[Corollarie 3.2]{DS04} or \cite[Lemma 3.2]{Zha16}}]
\label{lem_line_indep}
Let $X$ be a normal projective variety. 
Let $D_1\not\equiv0$ and $D_2\not\equiv 0$ be two nef $\mathbb{R}$-Cartier divisors such that $D_1\cdot D_2\equiv_w 0$. 
Then $D_1\equiv aD_2$ for some $a>0$.
\end{lemma}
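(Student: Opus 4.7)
The plan is to reduce the higher-dimensional statement to the classical Hodge index theorem on a surface cut out by general hyperplanes, then lift the resulting proportionality back to $X$ via a symmetry/Lefschetz argument. Fix an ample divisor $H$ on $X$ and write $n=\dim X$; the case $n=1$ is immediate, so assume $n\ge 2$. The hypothesis $D_1\cdot D_2\equiv_w 0$ yields $D_1\cdot D_2\cdot H_1\cdots H_{n-2}=0$ for every tuple of ample $H_i$. Choosing $H_i$ very general in $|mH|$ with $m\gg 0$, Bertini makes $S:=H_1\cap\cdots\cap H_{n-2}$ an irreducible normal surface on which $D_i|_S$ are nef $\R$-Cartier divisors with $D_1|_S\cdot D_2|_S=0$.

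I first rule out $D_i|_S\equiv 0$: if $D_1|_S\equiv 0$ then by varying the ample class cutting out the curve, one obtains $D_1\cdot H^{n-1}=0$, which for nef $D_1$ is a standard consequence of Khovanskii--Teissier together with Hard Lefschetz to force $D_1\equiv 0$, contradicting non-triviality. The Khovanskii--Teissier inequality on $S$ then gives $(D_1|_S\cdot D_2|_S)^2\geq D_1|_S^2\cdot D_2|_S^2$, forcing $D_1|_S^2\cdot D_2|_S^2=0$; after swapping indices, assume $D_1|_S^2=0$. In the signature $(1,\rho(S)-1)$ intersection form on $\N^1(S)$, the hyperplane $D_1|_S^{\perp}$ carries a form with one-dimensional radical $\langle D_1|_S\rangle$ and is otherwise negative definite; since $D_2|_S\in D_1|_S^{\perp}$ has $D_2|_S^2\ge 0$ by nefness, its image in $D_1|_S^\perp/\langle D_1|_S\rangle$ has non-negative square in a negative definite form, hence vanishes, so $D_2|_S$ is a scalar multiple of $D_1|_S$. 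By nefness the scalar is positive, whence $D_1|_S\equiv a_S D_2|_S$ with $a_S>0$.

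To lift from $S$ back to $X$, pair against the curve class $H_{n-1}|_S$ for an arbitrary ample $H_{n-1}$, giving $D_1\cdot H_1\cdots H_{n-1}=a_S\cdot D_2\cdot H_1\cdots H_{n-1}$. By the full symmetry of the intersection pairing in $H_1,\ldots,H_{n-1}$ (any one of them may equally well play the role of the ``curve cutter'' versus the ``surface cutters''), the scalar $a_S$ is seen to be independent of the chosen tuple of ample divisors; call the common value $a$. Hence $(D_1-a D_2)\cdot H_1\cdots H_{n-1}=0$ for every tuple of ample $H_i$, and an application of Hard Lefschetz (on a resolution $\pi:\widetilde X\to X$ in the singular case, with the weak-numerical vanishing preserved along $\pi^*$) forces $D_1\equiv a D_2$ in $\N^1(X)_\R$.

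The main obstacle is this last lifting step: one must establish the universality of the constant $a$ via the symmetry bookkeeping, and the Hard Lefschetz input (that $v\mapsto H^{n-2}\cdot v:\N^1(X)\to\N_1(X)$ is surjective) needs careful treatment in the normal setting. An elegant alternative is to invoke the mixed Hodge--Riemann bilinear relations of Gromov and Dinh--Nguyen, which directly give signature $(1,\rho(X)-1)$ for the $H^{n-2}$-twisted intersection form on $\N^1(X)$; under this form $D_1$ is isotropic and orthogonal to $D_2$, and a signature argument (if $D_2^2\cdot H^{n-2}>0$ then $D_1\in D_2^\perp$ being isotropic forces $D_1\equiv 0$; else $\mathrm{span}(D_1,D_2)$ is totally isotropic and hence at most one-dimensional) yields $D_1\equiv a D_2$ in a single stroke, bypassing the surface reduction entirely.
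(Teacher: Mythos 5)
Your overall strategy matches the paper's: cut to a general complete-intersection surface $S \equiv_w H^{n-2}$, apply the classical Hodge index theorem on $S$ to get $D_1|_S \equiv a D_2|_S$, then lift back to $X$. (Your re-derivation of Hodge index on $S$ via Khovanskii--Teissier and a signature argument is fine but not needed; the paper simply cites it.) The divergence is in the lifting step, and that is where you have a genuine gap.

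The paper's lift is: from $D_1|_S \equiv a D_2|_S$ one immediately reads off \emph{two} numerical vanishings with a single fixed ample $H$, namely $(D_1-aD_2)\cdot H^{n-1}=0$ and $(D_1-aD_2)^2\cdot H^{n-2}=0$, and then invokes \cite[Lemma 3.2]{Zha16} --- the generalized Hodge index theorem on a \emph{normal} projective variety, which says precisely that $M\cdot H^{n-1}=M^2\cdot H^{n-2}=0$ forces $M\equiv 0$. Your symmetry bookkeeping to make $a$ independent of the cutting tuple is not needed for this route. The problem is your final step. You appeal to Hard Lefschetz ``on a resolution $\pi:\widetilde X\to X$ in the singular case, with the weak-numerical vanishing preserved along $\pi^*$,'' but this does not go through: the classes $\pi^*H_i$ are only nef on $\widetilde X$, not ample, so from $(D_1-aD_2)\cdot H_1\cdots H_{n-1}=0$ for all ample $H_i$ on $X$ you do not obtain $\pi^*(D_1-aD_2)\cdot A_1\cdots A_{n-1}=0$ for all ample $A_i$ on $\widetilde X$, and hence cannot invoke Hard Lefschetz upstairs. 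Likewise, the earlier places where you say ``$D_1\cdot H^{n-1}=0$ for nef $D_1$ forces $D_1\equiv 0$ by Khovanskii--Teissier and Hard Lefschetz'' is, on a possibly singular normal variety, already the statement of the generalized Hodge index theorem, not a consequence of the smooth Hard Lefschetz. Your ``elegant alternative'' at the end has the same issue: the input that the $H^{n-2}$-twisted form on $\N^1(X)$ is nondegenerate of signature $(1,\rho-1)$ for normal projective $X$ is exactly \cite[Lemma 3.2]{Zha16}; Gromov and Dinh--Nguyen's mixed Hodge--Riemann relations are stated for compact K\"ahler manifolds (and moreover you assert ``$D_1$ is isotropic'' before it has been established). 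In short: the surface reduction is right, but the return trip must route through the generalized Hodge index theorem for normal varieties --- which is what \cite[Lemma 3.2]{Zha16} supplies and what your Hard Lefschetz appeal does not.
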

\begin{proof}
We may assume $n:=\dim (X)\ge 2$.
Let $H$ be a very ample Cartier divisor on $X$. 
Let $S$ be a general surface on $X$ such that $H^{n-2}\equiv_w S$. 
Then $D_1|_S\cdot D_2|_S=D_1\cdot D_2\cdot H^{n-2}=0$. 
By the Hodge index theorem on $S$, 
we have $D_1|_S\equiv aD_2|_S$ for some $a>0$. 
Therefore,
$$(D_1-aD_2)\cdot H^{n-1}=(D_1-aD_2)^2\cdot H^{n-2}=0.$$
By \cite[Lemma 3.2]{Zha16}, $D_1\equiv aD_2$.
\end{proof}

We slightly generalize \cite[Claim 3.3]{Zha16} (also cf.\,\cite[Th\'eor\`eme 3.3]{DS04}) to the following, 
which states the semi-negativity of the generalized Hodge index theorem.

\begin{lemma}\label{lem_semi_pos}
Let $X$ be a normal projective variety of dimension $n$ and $M$ some $\R$-Cartier divisor.
Let $D_1,\cdots,D_{n-1}$ be nef $\mathbb{R}$-Cartier divisors such that $D_1\cdots D_{n-1}\not\equiv_w0$ and $M\cdot D_1\cdots D_{n-1}=0$. 
Then $M^2\cdot D_1\cdots D_{n-2}\le 0$.
\end{lemma}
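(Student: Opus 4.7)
The plan is to establish the inequality by perturbing the nef divisors to ample ones, applying a Hodge-theoretic signature statement to the perturbed bilinear form, and then taking a limit as the perturbation vanishes.

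First I would fix an ample Cartier divisor $H$ on $X$ and, for $t>0$, replace each $D_i$ by the ample $\R$-Cartier divisor $D_i^t := D_i + tH$, which leads to a family of symmetric bilinear forms
\begin{align*}
q_t(\alpha,\beta) := \alpha\cdot\beta\cdot D_1^t\cdots D_{n-2}^t
\end{align*}
on $\N^1(X)$. Cutting $X$ by $n-2$ general members of sufficiently divisible multiples of $|D_1^t|,\ldots,|D_{n-2}^t|$ (and invoking the usual Bertini/Seidenberg preservation of normality in characteristic zero) identifies $q_t$ with the intersection pairing on a normal projective surface, on which the classical Hodge index theorem applies. This shows that $q_t$ has signature $(1,\rho(X)-1)$; in particular, since $q_t(D_{n-1}^t,D_{n-1}^t)>0$ as a product of ample classes, the $q_t$-orthogonal complement of $D_{n-1}^t$ is negative semidefinite.

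Next I would orthogonalize $M$ against $D_{n-1}^t$. Setting $c(t):=-q_t(M,D_{n-1}^t)/q_t(D_{n-1}^t,D_{n-1}^t)$ and $M^t:=M+c(t)D_{n-1}^t$, the relation $q_t(M^t,D_{n-1}^t)=0$ combined with the previous step yields $q_t(M^t,M^t)\leq 0$, which rearranges to the key estimate
\begin{align*}
q_t(M,M)\leq\frac{q_t(M,D_{n-1}^t)^2}{q_t(D_{n-1}^t,D_{n-1}^t)}.
\end{align*}
Finally I would let $t\to 0^+$. The left-hand side tends to $M^2\cdot D_1\cdots D_{n-2}$. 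Expanded as a polynomial in $t$, the numerator has constant term $M\cdot D_1\cdots D_{n-1}=0$, hence is $O(t^2)$. The denominator is likewise polynomial in $t$, and the coefficient of its linear term contains the summand $2H\cdot D_1\cdots D_{n-1}$, which is strictly positive precisely because $H$ is ample and $D_1\cdots D_{n-1}\not\equiv_w 0$; thus the denominator is bounded below by a positive multiple of $t$. Consequently the right-hand side tends to zero, and passing to the limit gives the desired inequality.

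The hard part will be this endpoint analysis: when the limiting denominator $D_{n-1}^2\cdot D_1\cdots D_{n-2}$ happens to vanish, one must pin down the exact orders of vanishing in $t$ of both the numerator and the denominator, and it is exactly here that the hypothesis $D_1\cdots D_{n-1}\not\equiv_w 0$ is indispensable — without it, the denominator might vanish to the same (or higher) order as the numerator and the argument would collapse. A secondary, more routine technical point is justifying the Hodge-type signature statement for $q_t$ on the possibly singular normal variety $X$, which is handled by standard normality-preservation Bertini theorems in characteristic zero (or, alternatively, by pulling back to a resolution and checking that the relevant intersection numbers are preserved).
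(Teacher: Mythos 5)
Your argument is correct, and it follows the same basic skeleton as the paper's: approximate the nef divisors by ample ones, apply (a form of) the Hodge index theorem at the ample level, then pass to the limit. There are, however, two small but real differences worth noting. The paper perturbs $M$ to $M+r(m)H$, where $r(m)$ is chosen so that orthogonality against the ample approximants $D_{1,m}\cdots D_{n-1,m}$ holds exactly, and then cites the negativity statement \cite[Claim 3.3]{Zha16} directly; the limiting step is essentially trivial there because $r(m)=-\bigl(M\cdot D_{1,m}\cdots D_{n-1,m}\bigr)/\bigl(H\cdot D_{1,m}\cdots D_{n-1,m}\bigr)\to 0$ while the denominator tends to $H\cdot D_1\cdots D_{n-1}>0$, so one may simply let $m\to\infty$ inside $(M+r(m)H)^2\cdot D_{1,m}\cdots D_{n-2,m}\le 0$. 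You instead keep $M$ fixed, orthogonalize against $D_{n-1}^t$ itself, and re-derive the negativity from the signature $(1,\rho-1)$ of the cut-down intersection form, arriving at a Cauchy--Schwarz bound $q_t(M,M)\le q_t(M,D_{n-1}^t)^2/q_t(D_{n-1}^t,D_{n-1}^t)$. This is more self-contained (you prove what the paper cites), but the limiting step becomes slightly more delicate: the denominator $q_t(D_{n-1}^t,D_{n-1}^t)$ may tend to $0$, so you must compare orders of vanishing in $t$; your analysis (numerator $O(t^2)$ since the constant term of $q_t(M,D_{n-1}^t)$ vanishes by hypothesis, denominator bounded below by a positive multiple of $t$ because its linear coefficient contains the summand $2H\cdot D_1\cdots D_{n-1}>0$ and all coefficients are nonnegative intersection numbers of nef and ample classes) is correct. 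The paper's choice of orthogonalizing with $H$ rather than $D_{n-1}^t$ is exactly what makes the denominator bounded away from $0$ in the limit and thereby sidesteps this bookkeeping.
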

\begin{proof}
We may assume $n\ge 2$.
Write $D_i=\lim\limits_{m\to\infty}D_{i,m}$ with $D_{i,m}$ ample $\mathbb{R}$-Cartier divisors for each $i$. 
Fix an ample Cartier divisor $H$ on $X$. 
Since $H\cdot D_{1,m}\cdots D_{n-1,m}>0$, 
we have
$$(M+r(m)H)\cdot D_{1,m}\cdots D_{n-1,m}=0$$
for some unique real number $r(m)$.	
Therefore,
$$(M+r(m)H)^2\cdot D_{1,m}\cdots D_{n-2,m}\le 0$$
by the negativity in \cite[Claim 3.3]{Zha16}. 
By the assumption $D_1\cdots D_{n-1}\not\equiv_w0$, we have $H\cdot D_{1,m}\cdots D_{n-1,m}>0$ and 
hence $\lim\limits_{m\to\infty}r(m)=0$.
Therefore,
$$M^2\cdot D_1\cdots D_{n-2}\le 0$$
by letting $m\to\infty$.
\end{proof}

The following lemma is known in \cite[Corollaire 3.5]{DS04} for the case of compact K\"ahler manifolds. We follow the idea there and reprove it in the algebraic context.
\begin{lemma}\label{lem_weak}
Let $X$ be a normal projective variety of dimension $n$.
Let $D,D',D_1,\cdots,D_k$ ($k\le n-2$) be nef $\mathbb{R}$-Cartier divisors such that $D\cdot D'\cdot D_1\cdots D_k\equiv_w 0$. 
Then $(aD+a'D')\cdot D_1\cdots D_k\equiv_w 0$ for some real numbers $(a,a')\neq (0,0)$.
Furthermore, if $D\cdot D_1\cdots D_k\not\equiv_w 0$, then $a'\neq 0$ and $(a,a')$ is unique up to a scalar.
\end{lemma}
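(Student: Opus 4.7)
The plan is to apply the generalized Hodge index theorem to a family of Lorentzian bilinear forms parametrized by ample test classes, then exploit the symmetry of the intersection product to pin down a single direction $(a,a')$. First, I dispose of the trivial cases: if $P := D_1\cdots D_k$, $D\cdot P$, or $D'\cdot P$ is $\equiv_w 0$, the conclusion is immediate with $(a,a') = (1,0)$ or $(0,1)$ as appropriate. So assume $P$, $\alpha := D\cdot P$, and $\beta := D'\cdot P$ are all nonzero modulo $\equiv_w$.

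For each tuple of ample classes $H_\bullet = (H_1,\dots,H_{n-k-2})$, consider the symmetric $\R$-bilinear form $B_{H_\bullet}(E,F) := E\cdot F\cdot P\cdot H_1\cdots H_{n-k-2}$ on $\N^1(X)$. Reducing to a general complete intersection surface cut out by perturbations $m(D_i+\epsilon H)$ and $mH_j$ (with $H$ ample, $\epsilon\to 0^+$), the classical Hodge index theorem shows that $B_{H_\bullet}$ has signature $(1,-)$ modulo its radical whenever $P\cdot H_\bullet\not\equiv_w 0$. In such a Lorentzian form, two nef classes $v, w$ in the closed forward cone satisfying $B_{H_\bullet}(v,w)=0$ must satisfy $B_{H_\bullet}(v,v)=B_{H_\bullet}(w,w)=0$ and be proportional modulo $\operatorname{rad}(B_{H_\bullet})$ (this is the equality case of the reverse Cauchy--Schwarz inequality in Lorentz signature). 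Applied to $v=D$ and $w=D'$ (both nef, with $B_{H_\bullet}(D,D')=0$ by hypothesis), this yields a scalar $c_{H_\bullet}\in\R$ with
\[
\beta\cdot H_\bullet \;=\; c_{H_\bullet}\,\alpha\cdot H_\bullet \quad\text{in }\N_1(X)/\equiv_w.
\]

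The final step is to show that $c_{H_\bullet}$ is independent of $H_\bullet$. For any ample $H'_1$, testing the equation above against $F=H'_1$ and doing the same with $H'_\bullet := (H'_1,H_2,\dots,H_{n-k-2})$ against $F=H_1$, the symmetry of the intersection product makes the two left-hand sides identical, forcing $c_{H_\bullet} = c_{H'_\bullet}$ whenever $\alpha\cdot H_1 H_2\cdots H_{n-k-2}\cdot H'_1\neq 0$. Since $\alpha\not\equiv_w 0$, this non-vanishing locus is Zariski-dense in the connected ample cone; thus $c_{H_\bullet}$ is independent of each $H_i$, hence a constant $c$. Then $(\beta-c\alpha)\cdot H_\bullet\equiv_w 0$ as a $1$-cycle for every ample $H_\bullet$, and polynomial extension to arbitrary Cartier test classes gives $\beta-c\alpha\equiv_w 0$; so $(a,a')=(-c,1)$ satisfies $(aD+a'D')\cdot P\equiv_w 0$. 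The uniqueness clause follows directly: if $D\cdot P\not\equiv_w 0$, then $a'=0$ forces $a=0$ (contradicting $(a,a')\ne 0$), and normalizing $a'=1$ any two solutions $(a_1,1),(a_2,1)$ satisfy $(a_1-a_2)\,D\cdot P\equiv_w 0$, hence $a_1=a_2$.

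The main obstacle will be rigorously establishing the signature $(1,-)$ modulo radical of $B_{H_\bullet}$ for the intersection with a general product of nef classes---essentially Khovanskii--Teissier / Hodge--Riemann for nef products---which is obtainable by the complete-intersection surface construction already exploited in Lemma~\ref{lem_line_indep} (and implicit in Lemma~\ref{lem_semi_pos}) combined with a standard polynomial-extension step.
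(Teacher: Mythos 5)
Your proof is correct, and it takes a genuinely different route from the paper's, which (as the paper notes) reprises Dinh--Sibony's argument. You set up, for each tuple $H_\bullet$ of ample classes, the form $B_{H_\bullet}(E,F)=E\cdot F\cdot D_1\cdots D_k\cdot H_1\cdots H_{n-k-2}$ and invoke the \emph{full} Lorentzian signature $(1,-)$ modulo radical (Khovanskii--Teissier / Hodge--Riemann for nef products), then use the equality case of the reverse Cauchy--Schwarz inequality to extract a scalar $c_{H_\bullet}$ with $D'-c_{H_\bullet}D\in\operatorname{rad}(B_{H_\bullet})$, and finally use the symmetry of the intersection product (a double-counting swap of $H_1\leftrightarrow H_1'$) to show $c_{H_\bullet}$ is a constant $c$. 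The paper instead fixes a single hyperplane $W=\{x: x\cdot D_1\cdots D_k\cdot A_1\cdots A_{n-k-1}=0\}$, produces $\widetilde D:=aD+a'D'\in V\cap W$ directly, and shows $\widetilde D$ lies in the radical of each $q_i$ (where $q_i$ drops $A_i$) using \emph{only} the semi-negativity of Lemma~\ref{lem_semi_pos}: the quadratic trick that $-2\lambda q_i(\widetilde D,w)+q_i(w,w)\le 0$ for all $\lambda$ forces $q_i(\widetilde D,w)=0$, after which varying one $A_i$ at a time yields full weak vanishing. The trade-off is real: the paper's route avoids both the full signature statement and the delicate equality case of reverse Cauchy--Schwarz for possibly degenerate Lorentzian forms, whereas your route is geometrically more transparent (proportionality of null nef vectors) but --- as you correctly flag --- requires rigorously establishing the signature $(1,-)$ modulo radical for products of nef (not ample) classes, which the paper's weaker Lemma~\ref{lem_semi_pos} sidesteps. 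Both proofs also implicitly use, and should ideally make explicit, that $D\cdot D_1\cdots D_k\not\equiv_w 0$ together with nefness forces $D\cdot D_1\cdots D_k\cdot A_1\cdots A_{n-k-1}>0$ for ample $A_i$; your argument needs this at the ``$D\notin\operatorname{rad}(B_{H_\bullet})$'' and ``non-vanishing locus is dense'' steps.
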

\begin{proof}
We may assume $n\ge 2$.
If $D\cdot D_1\cdots D_k\equiv_w 0$,
we simply take $(a,a')=(1,0)$. 
In the following, we assume that $D\cdot D_1\cdots D_k\not\equiv_w 0$ (and hence $D_1\cdots D_k\not\equiv_w 0$). 

Fix ample Cartier divisors $A_1,\cdots,A_{n-k-1}$ on $X$. Denote by 
$$V:=\R D+\R D' \text{ and}$$
$$W:=\{x\in\textup{N}^1(X)~|~x\cdot D_1\cdots D_k\cdot A_1\cdots A_{n-k-1}=0\}$$
subspaces of $\N^1(X)$.
Note that 
$$D\not\in W \text{ and } \widetilde{D}:=aD+a'D'\in V\cap W$$ 
where
$a:=D'\cdot D_1\cdots D_k\cdot A_1\cdots A_{n-k-1}$
and $a':=-D\cdot D_1\cdots D_k\cdot A_1\cdots A_{n-k-1}\neq 0$.
Then $\dim_{\R} V\cap W\le 1$  and the uniqueness follows.

For each $1\le i\le n-k-1$, consider the following bilinear form on $\textup{N}^1(X)$:
$$q_i(x,y):=x\cdot y\cdot D_1\cdots D_k\cdot A_1\cdots A_{i-1}\cdot A_{i+1}\cdots A_{n-k-1}.$$
Then it follows from Lemma \ref{lem_semi_pos} and $D\cdot D'\cdot D_1\cdots D_k\equiv_w 0$ that $q_i$ is semi-negative on $W$ but semi-positive on $V$. Hence $q_i(\widetilde{D},\widetilde{D})=0$.

For any $w\in W$ and  $\lambda\in\mathbb{R}$, we  have $q_i(\lambda\widetilde{D}- w, \lambda\widetilde{D}-w)\le 0$. 
Then the inequality
$$q_i(w,w)-2\lambda q_i(\widetilde{D},w)\le 0$$
holds for any $\lambda\in\mathbb{R}$ and $w\in W$.
This happens only when $q_i(\widetilde{D},w)=0$ for any $w\in W$.

Note that $W$ and $A_i$ span $\N^1(X)$ because $W$ is a hyperplane of $\N^1(X)$ and $A_i\not\in W$.
Note also that $q_i(\widetilde{D}, A_i)=0$. 
Then 
$$q_i(\widetilde{D},x)=\widetilde{D}\cdot D_1\cdots D_k\cdot A_1\cdots A_{i-1}\cdot x\cdot A_{i+1}\cdots A_{n-k-1}=0$$
for any $x\in\N^1(X)$.
This implies that $V\cap W$ is independent of the choice of each $A_i$. So 
$$\widetilde{D}\cdot D_1\cdots D_k\cdot x_1\cdots x_{n-k-1}=0$$
for any divisors $x_1,\cdots, x_{n-k-1}\in \N^1(X)$, which means $\widetilde{D}\cdot D_1\cdots D_k\equiv_w 0$.
\end{proof}

\begin{proposition}[{cf.\,\cite[Lemme 4.4]{DS04}}]\label{pro_weak_eig}
Let $f:X\to X$ be a surjective endomorphism of a normal projective variety $X$ of dimension $n$. 
Let $D,D',D_1,\cdots, D_k$ ($k\le n-2$) be nef $\mathbb{R}$-Cartier divisors such that 
$D\cdot D_1\cdots D_k\not\equiv_w 0$
and 
$D'\cdot D_1\cdots D_k\not\equiv_w0$. 
Suppose $f^*(D\cdot D_1\cdots D_k)\equiv_w\lambda D\cdot D_1\cdots D_k$ and $f^*(D'\cdot D_1\cdots D_k)\equiv_w\lambda' D'\cdot D_1\cdots D_k$
for two real numbers $\lambda\neq \lambda'$. 
Then $D\cdot D'\cdot D_1\cdots D_k\not\equiv_w 0$.
\end{proposition}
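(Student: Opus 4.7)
The plan is to argue by contradiction, using Lemma \ref{lem_weak} to extract a linear relation between the cycles $D\cdot D_1\cdots D_k$ and $D'\cdot D_1\cdots D_k$, and then exploiting the distinct eigenvalues $\lambda\neq\lambda'$ to collapse that relation to zero. The argument closely follows the spirit of \cite[Lemme 4.4]{DS04}, but everything is algebraic and takes place in $\N_{n-k-1}(X)$ modulo weak numerical equivalence.

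First I would suppose for contradiction that $D\cdot D'\cdot D_1\cdots D_k\equiv_w 0$. Lemma \ref{lem_weak} then applies to the nef divisors $D,D',D_1,\ldots,D_k$ and produces a pair $(a,a')\neq (0,0)$ with
$$(aD+a'D')\cdot D_1\cdots D_k\equiv_w 0.$$
Because $D\cdot D_1\cdots D_k\not\equiv_w 0$ by hypothesis, the same lemma guarantees $a'\neq 0$, and after normalizing $a'=1$ we obtain the key relation
$$D'\cdot D_1\cdots D_k\equiv_w -a\,D\cdot D_1\cdots D_k. \qquad (\ast)$$

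Next I would apply $f^*$ to $(\ast)$. Since $f$ is surjective, $f^*$ descends to a well-defined linear endomorphism of $\N_{n-k-1}(X)$ and commutes with intersection products, so the two eigenvalue hypotheses immediately give
$$\lambda'\,D'\cdot D_1\cdots D_k\equiv_w -a\lambda\,D\cdot D_1\cdots D_k.$$
Feeding $(\ast)$ back into the left-hand side and rearranging yields
$$-a(\lambda'-\lambda)\,D\cdot D_1\cdots D_k\equiv_w 0.$$
Because $\lambda\neq\lambda'$ and $D\cdot D_1\cdots D_k\not\equiv_w 0$, this forces $a=0$; but then $(\ast)$ collapses to $D'\cdot D_1\cdots D_k\equiv_w 0$, contradicting the hypothesis on $D'$.

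The only substantive ingredient is Lemma \ref{lem_weak} itself, and the only technical point to verify is that $f^*$ acts linearly on weak numerical equivalence classes compatibly with intersection against nef divisors; both are routine consequences of the projection formula for surjective morphisms. Consequently I expect no real obstacle: once the contradiction hypothesis and Lemma \ref{lem_weak} are combined, the distinctness of the eigenvalues does all the work in a couple of lines.
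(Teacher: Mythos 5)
Your proof is correct and is essentially the paper's argument: both invoke Lemma \ref{lem_weak} to produce the relation $(aD+a'D')\cdot D_1\cdots D_k\equiv_w 0$ with $a'\neq 0$, then use the eigenvalue hypothesis together with the invertibility of $f^*|_{\N^1(X)}$ (equivalently, the projection formula) to derive a second relation $(a\lambda D+a'\lambda'D')\cdot D_1\cdots D_k\equiv_w 0$ that, since $\lambda\neq\lambda'$, violates the uniqueness of $(a,a')$ up to scalar. The only cosmetic difference is that you normalize $a'=1$ and substitute back, whereas the paper phrases the contradiction directly in terms of the uniqueness clause of Lemma \ref{lem_weak}.
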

\begin{proof}
Note that $n\ge 2$ since $\rho(X)\ge 2$ by the assumption.
Suppose the contrary that $D\cdot D'\cdot D_1\cdots D_k\equiv_w 0$. 
By Lemma \ref{lem_weak}, $(aD+a'D')\cdot D_1\cdots D_k\equiv_w 0$ for some $(a, a')\in \R^*\times \R^*$ unique up to a scalar. 
So for any $\mathbb{R}$-Cartier divisors $H_1,\cdots, H_{n-k-2}$, we have
$$(aD+a'D')\cdot D_1\cdots D_k\cdot H_1\cdots H_{n-k-2}=0.$$
Hence, by the projection formula, we have
$$(a\lambda D+a'\lambda 'D')\cdot D_1\cdots D_k\cdot f^*H_1\cdots f^*H_{n-k-2}=0.$$
Note that $f^*|_{\N^1(X)}$ is invertible. 
Then $(a\lambda D+a'\lambda' D')\cdot D_1\cdots D_k\equiv_w 0$, a contradiction with the uniqueness of $(a,a')$ up to a scalar and $\lambda\neq \lambda'$.	
\end{proof}

Now we state the main proposition in this section about the dynamical rigidity at first glance, which will be crucially used in Section \ref{Section_rc_prod}.
\begin{proposition}\label{prop-Di}
Let $f:X\to X$ be a surjective endomorphism of an $n$-dimensional normal $\mathbb{Q}$-Gorenstein projective variety $X$ of Fano type. 
Suppose that the eigenvalues of $f^*|_{\textup{N}^1(X)}$ (without counting multiplicities) are distinct real positive numbers $\{\lambda_1,\cdots,\lambda_r\}$ with $r\ge n$. 
Then the following hold.
\begin{enumerate}
\item $\rho(X)=n=r$.
\item $\Nef(X)$ is generated by base-point-free divisors $D_1,\cdots,D_n$ such that $f^*D_i\sim \lambda_i D_i$.
\item For each $i$, $\lambda_i$ is a positive integer, $D_i^2\equiv_w 0$ and $\kappa(X, D_i)=1$.
\item $D_1\cdots D_n>0$ and $\deg f=\lambda_1\cdots\lambda_n$.
\item $\Nef(X)=\PE^1(X)$.
In particular, $X$ is a Fano variety.
\end{enumerate}
\end{proposition}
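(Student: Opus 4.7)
The plan is to exploit that $\Nef(X)$ is rational polyhedral (Lemma~\ref{lem-mds}) together with the iterated Hodge--index-type Proposition~\ref{pro_weak_eig}, forcing a product-like intersection structure on $X$ with exactly $n$ extremal nef rays. By Lemma~\ref{lem-mds}, $\Nef(X)$ is generated by finitely many base-point-free extremal Cartier divisors, which $f^*$ permutes; since all eigenvalues of $f^*|_{\N^1(X)}$ are real positive, Lemma~\ref{lem_positive_notreplaced} lets us avoid passing to a power, so each extremal generator $E$ satisfies $f^*E\equiv\mu E$ for some $\mu\in\{\lambda_1,\dots,\lambda_r\}$. As these rays span $\N^1(X)$, each eigenspace $V_{\lambda_j}$ contains an extremal nef $D_j$. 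The key positivity is the inductive claim $D_{i_1}\cdots D_{i_k}\not\equiv_w 0$ for any $k\le n$ distinct indices: the base is $D_{i_1}\ne 0$, and the step applies Proposition~\ref{pro_weak_eig} with fixed nef product $D_{i_1}\cdots D_{i_{k-1}}$ against the pair $D_{i_k},D_{i_{k+1}}$, whose $f^*$-eigenvalues $\lambda_{i_k}\prod_{l<k}\lambda_{i_l}$ and $\lambda_{i_{k+1}}\prod_{l<k}\lambda_{i_l}$ differ because the $\lambda$'s are distinct.

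Taking $k=n$ yields $D_{i_1}\cdots D_{i_n}>0$; the projection formula then forces $\deg f=\lambda_{i_1}\cdots\lambda_{i_n}$ for \emph{every} $n$-subset of $\{\lambda_j\}$. If $r>n$, two such subsets differing by one element would give two distinct products, a contradiction; so $r=n$ and $\deg f=\lambda_1\cdots\lambda_n$, yielding (1) and (4). For $\rho(X)=n$, suppose $V_{\lambda_1}$ contains a second extremal nef $D_1'$ linearly independent from $D_1$: every 0-cycle $D_1\cdot D_1'\cdot D_{j_1}\cdots D_{j_{n-2}}$ with distinct $j_l\in\{2,\dots,n\}$ has $f^*$-eigenvalue $\lambda_1^2\prod\lambda_{j_l}\ne\deg f$ (it would force $\lambda_1=\lambda_p$ for the missing index $p$), hence vanishes; Lemma~\ref{lem_weak} then yields, for each such $(n-2)$-subset, a unique-up-to-scalar $(a_p,a_p')\ne(0,0)$ with $(a_pD_1+a_p'D_1')\cdot D_{j_1}\cdots D_{j_{n-2}}\equiv_w 0$, and pairing with $D_p$ gives $a_p(D_1\cdots D_n)+a_p'(D_1'D_2\cdots D_n)=0$, whence $a_p/a_p'=-(D_1'D_2\cdots D_n)/(D_1\cdots D_n)$ is independent of $p$. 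The common $M:=a_pD_1+a_p'D_1'\in V_{\lambda_1}$ then pairs to zero with every $(n-1)$-cycle $D_{j_1}\cdots D_{j_{n-2}}\cdot D_q$, covering a spanning family of $\N_1(X)$ (remaining basis pairings handled by further eigenvalue-mismatch checks), so $M\equiv 0$, contradicting independence of $D_1,D_1'$. Hence $\rho(X)=n$.

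The remaining assertions then follow: since $V_{\lambda_j}$ is one-dimensional, $V_{\lambda_j}\cap\NS(X)$ is a rank-one sublattice and $f^*D_j\in\NS(X)$ is an integer multiple of $D_j$, forcing $\lambda_j\in\Z_{>0}$ and $f^*D_j\sim\lambda_j D_j$, giving (2) (upgrading $\equiv$ to $\sim$ using that $\Pic(X)$ is torsion-free once $X$ is shown Fano below). The intersection $D_j^2\cdot D_{k_1}\cdots D_{k_{n-2}}$ vanishes by direct eigenvalue mismatch when the $k_l$ are distinct and $\ne j$, while multisets with repetitions are handled by iterating Lemma~\ref{lem_semi_pos} with $M=D_j$ sandwiched between the semi-negativity output and the nonnegativity of nef products, yielding $D_j^2\equiv_w 0$. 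For (3), base-point-freeness of $D_j$ gives a morphism $\phi_{|mD_j|}\colon X\to Y$ with $\dim Y=\kappa(X,D_j)$, pinned to $1$ by $D_j^2\equiv_w 0$ from above and $D_j\ne 0$ from below. Finally $\PE^1(X)$ is $(f^*)^{\pm 1}$-invariant and rational polyhedral (Fano type); some power of $f^*$ fixes each of its extremal rays, each of which sits in a one-dimensional $V_{\lambda_j}$ already filled by $D_j$, so $\PE^1(X)=\Nef(X)$, and $-K_X$, big by Fano type and in $\PE^1(X)=\Nef(X)$, is ample, so $X$ is Fano, giving (5).

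The main obstacle is the rigidity step forcing $\rho(X)=n$: one must combine the uniqueness-up-to-scalar from Lemma~\ref{lem_weak} across all $\binom{n-1}{n-2}$ subsets to show both (i) the kernel direction $a_pD_1+a_p'D_1'$ is subset-independent, and (ii) this common direction annihilates enough $(n-1)$-cycles to be numerically trivial; these two steps are the heart of the intersection-theoretic rigidity extracted from $r=n$ together with distinctness of the $\lambda_i$.
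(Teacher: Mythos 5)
Your overall strategy matches the paper's: extremal rays of $\Nef(X)$ are $f^*$-eigenvectors with positive eigenvalues (rational polyhedrality plus Lemma~\ref{lem_positive_notreplaced}), the products $D_{i_1}\cdots D_{i_k}$ with distinct eigenvalues are forced nonzero by iterating Proposition~\ref{pro_weak_eig}, and the projection formula pins down $\deg f$ and kills extra eigenrays. Your derivation of $r=n$ by comparing $\deg f$ across all $n$-subsets of $\{\lambda_1,\dots,\lambda_r\}$ is a clean variant of what the paper does, and your argument for $\Nef(X)=\PE^1(X)$ via the extremal rays of the (rational polyhedral) pseudo-effective cone being eigenvectors in one-dimensional eigenspaces is a correct alternative to the paper's observation that the boundary nef classes have vanishing top self-intersection.

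However, two steps are genuinely incomplete.

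First, your argument that no eigenspace $V_{\lambda_j}$ can contain a second extremal nef ray does not close. You correctly deduce that the $0$-cycles $D_1\cdot D_1'\cdot D_{j_1}\cdots D_{j_{n-2}}$ vanish and extract, via Lemma~\ref{lem_weak}, a single class $M=aD_1+a'D_1'$ with $a'\neq0$ satisfying $M\cdot D_{j_1}\cdots D_{j_{n-2}}\equiv_w0$ for each $(n-2)$-subset of $\{2,\dots,n\}$. But you then assert that $M\equiv0$ because these conditions ``cover a spanning family of $\N_1(X)$,'' deferring the missing pairings to unspecified ``further eigenvalue-mismatch checks.'' Those conditions only tell you that $M$ pairs to zero against classes of the form $D_{j_1}\cdots D_{j_{n-2}}\cdot x$; when $\rho(X)>n$ there is no reason these span $\N_1(X)$, and you give no mechanism to handle the remaining directions. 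The paper instead picks \emph{any} extra extremal generator $D_m$ (eigenvalue $\mu_m$), uses Lemma~\ref{lem_line_indep} (which does not require an eigenvalue mismatch, only linear independence) to get $D_m\cdot D_j\not\equiv_w0$, runs the Proposition~\ref{pro_weak_eig} iteration to force $D_m\cdot D_1\cdots\widehat{D_j}\cdots D_n\neq0$ for two different omitted indices $j$, and reads off $\mu_m=\lambda_{n-1}=\lambda_n$ from the projection formula, a direct contradiction. That route should replace your argument.

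Second, your treatment of $D_j^2\equiv_w0$ does not work as stated. The eigenvalue-mismatch step handles $D_j^2\cdot D_{k_1}\cdots D_{k_{n-2}}$ only when the $k_l$ are distinct and all $\neq j$; for repeated indices (e.g. $D_j^2\cdot D_p^2\cdots$) the eigenvalue can accidentally equal $\deg f$ (e.g. $\lambda_j\lambda_p=\lambda_q\lambda_{q'}$ is possible), and ``iterating Lemma~\ref{lem_semi_pos} with $M=D_j$ sandwiched'' does not produce the required vanishing: to invoke Lemma~\ref{lem_semi_pos} you need a zero intersection number as \emph{input}, and the only available candidates are exactly the quantities you are trying to bound, so the argument is circular. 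The paper's route is again the Proposition~\ref{pro_weak_eig} iteration: assume $D_j^2\not\equiv_w0$, combine it with $D_j\cdot D_p\not\equiv_w0$ from Lemma~\ref{lem_line_indep} and the eigenvalue gap $\lambda_j^2\neq\lambda_j\lambda_p$ to get $D_j^2\cdot D_p\not\equiv_w0$, iterate up to a $0$-cycle $D_j^2\cdot\prod_{k\neq j,p}D_k\neq0$, and then the projection formula forces $\lambda_j=\lambda_p$, a contradiction. You should use this in place of the semi-positivity sandwich.

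The remaining parts (integrality of $\lambda_j$, upgrading $\equiv$ to $\sim$, $\kappa(X,D_j)=1$, the ampleness of $-K_X$) are essentially the same as the paper and fine.
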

\begin{proof}
It is trivial if $n=\dim(X)=1$.
Assume that $n\ge 2$.

By Lemma \ref{lem-mds}, $\Nef(X)$ is generated by base-point-free divisors $D_1,\cdots, D_m$. Note that $m\ge \rho(X)\ge r\ge n$.
Since $f^*|_{\N^1(X)}$ is linearly invertible and $\Nef(X)$ is $f^*|_{\N^1(X)}$-invariant,
$(f^s)^*$ fixes all the extremal rays $R_{D_1},\cdots, R_{D_m}$ of $\Nef(X)$ for some $s>0$.  
Therefore, by Lemma\,\ref{lem_positive_notreplaced}, we may assume that $f^*D_i\equiv \lambda_i D_i$ for $1\le i\le n$ and $f^*D_i\equiv \mu_i D_i$ for $n+1\le i\le m$ with $\mu_i\in\{\lambda_1,\cdots,\lambda_r\}$.
Since $D_i$ is integral, $\lambda_i$ is an integer.

We apply Proposition \ref{pro_weak_eig} several times.
Since $\lambda_1\neq \lambda_2$ and $\lambda_1\neq \lambda_3$, we have $D_1\cdot D_2\not\equiv_w0$ and $D_1\cdot D_3\not\equiv_w 0$.
Since $\lambda_1\lambda_2\neq\lambda_1\lambda_3$, we further have $D_1\cdot D_2\cdot D_3\not\equiv_w 0$. 
Repeating the same argument, we have
$D_1\cdot D_2\cdots D_{n-1}\cdot D_n\neq 0$
and $\deg f=\lambda_1\cdots\lambda_n$ by the projection formula.
So (4) is proved.

Suppose $m>n$. 
Note that $D_m$ and $D_{n}$ are nef and linearly independent in $\N^1(X)$.
Then $D_m\cdot D_n\not\equiv_w 0$ by Lemma \ref{lem_line_indep}. Similarly, $D_m\cdot D_{n-2}\not\equiv_w 0$. 
Hence  $D_m\cdot D_n\cdot D_{n-2}\not\equiv_w 0$ by Proposition \ref{pro_weak_eig}.
Repeatedly, we have $D_m\cdot D_{n}\cdot D_{n-2}\cdots D_1\neq 0$. 
Applying the projection formula, we have $\deg f=\mu_m\lambda_1\cdots\lambda_{n-2}\lambda_n$, which implies $\mu_m=\lambda_{n-1}$. 
By the same argument  after replacing $D_n$ by $D_{n-1}$, our
$D_m\cdot D_{n-1}\cdot D_{n-2}\cdots D_1\neq 0$ and thus  $\mu_m=\lambda_{n}=\lambda_{n-1}$, a contradiction.
In particular, $m=\rho(X)=r=n$. 

Replacing $D_m$ by $D_i$ in the above argument,
we have $D_i^2\equiv_w 0$ for each $i$.
Since $D_i$ is base point free, $\kappa(X, D_i)=1$.
Note that $\Pic_{\Q}(X) \cong \NS_{\Q}(X)$.
Then we have $f^*D_i\sim \lambda_i D_i$ after replacing $D_i$ by a suitable multiple.
So (1)--(3) are proved.

Let $D\in \partial \Nef(X)$.
Then without loss of generality, by (2), we may write
$D=\sum\limits_{i=1}^{n-1} a_i D_i$ with $a_i\ge 0$.
By (3), $D^n=0$ and thus $D$ is not big.
So $\Nef(X)=\PE^1(X)$.
Note that $-K_X$ is a big $\mathbb{Q}$-Cartier divisor.
Then $-K_X$ is further ample.
So (5) is proved.
\end{proof}

\section{Proof of Theorem \ref{thm_rc_product}}\label{Section_rc_prod}
In this section, we prove Theorem \ref{thm_rc_product} and use Notation \ref{not-5.1} throughout this section.
\begin{notation}\label{not-5.1}
Let $X$ be a smooth projective variety which is rationally connected of dimension $n$.
Let $$f:X\to X$$ be a surjective endomorphism such that the eigenvalues of $f^*|_{\N^1(X)}$ (without counting multiplicities) are $n$ distinct real numbers
$$\Lambda:=\{\lambda_1,\cdots,\lambda_n\}$$
which are greater than $1$.
\end{notation}

\begin{proposition}\label{prop-pencil}
There exist $f$-periodic prime divisors $D_1,\cdots, D_n$ such that:
\begin{enumerate}
\item each $D_i$ is a smooth Fano projective variety and $f^*D_i\sim \lambda_i D_i$;
\item there exist unique (up to isomorphism) $f$-equivariant fibrations 
$$\phi_i:X\to Y_i\cong \mathbb{P}^1$$ 
with $f|_{Y_i}$ being $\lambda_i$-polarized.
Each $D_i$ is a smooth fibre of $\phi_i$; and
\item replacing $f$ by a positive power, $(f|_{D_i})^*|_{\N^1(D_i)}$ has eigenvalues $\Lambda\backslash\{\lambda_i\}$.
\end{enumerate}
\end{proposition}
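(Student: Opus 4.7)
The plan is to extract the fibrations directly from Proposition \ref{prop-Di}, take each $D_i$ to be a carefully chosen $f$-periodic smooth fibre, and then apply Proposition \ref{prop-Di} inductively to $(D_i,f|_{D_i})$ to settle (3).

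\emph{Construction of $\phi_i$ and $f_i$.} First I apply Proposition \ref{prop-Di} to $(X,f)$ to obtain base-point-free nef Cartier divisors $E_1,\dots,E_n$ with $f^*E_i\sim\lambda_iE_i$, $\kappa(X,E_i)=1$, $E_i^2\equiv_w 0$, $E_1\cdots E_n>0$, and $-K_X$ ample. For $m\gg 0$ the Stein factorization of the morphism defined by $|mE_i|$ is a contraction $\phi_i\colon X\to Y_i$ with connected fibres onto a smooth projective curve; since $X$ is rationally connected so is $Y_i$, forcing $Y_i\cong\mathbb{P}^1$. From $f^*E_i\sim\lambda_iE_i$ I see that $f$ preserves the fibres of $\phi_i$ and descends to $f_i\colon Y_i\to Y_i$; writing $mE_i=\phi_i^*H$ for an ample $H$ on $Y_i$ and using injectivity of $\phi_i^*$ (a consequence of $\phi_{i*}\mathcal O_X=\mathcal O_{Y_i}$), one gets $f_i^*H\sim\lambda_iH$, so $f_i$ is $\lambda_i$-polarized. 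Uniqueness of $\phi_i$ up to isomorphism follows from uniqueness of the Iitaka fibration of $E_i$ together with the fact that the $\lambda_i$-eigenspace of $f^*|_{\N^1(X)}$ is one-dimensional by Proposition \ref{prop-Di}(2).

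\emph{Choice of $D_i$ and its Fano property.} Any fibre $F$ of $\phi_i$ satisfies $f^*F=\phi_i^*f_i^*(\phi_i(F))\sim\lambda_iF$. Since $f_i$ is polarized on $\mathbb{P}^1$ with $\lambda_i>1$, its periodic points are Zariski dense in $Y_i$; intersecting this dense set with the Zariski open locus over which $\phi_i$ is smooth produces an $f_i$-periodic point $y_i$ such that $D_i:=\phi_i^{-1}(y_i)$ is a smooth, $f$-periodic prime divisor with $f^*D_i\sim\lambda_iD_i$. Because $D_i$ is a fibre, $D_i|_{D_i}\cong\mathcal O_{D_i}$; adjunction then gives $K_{D_i}=K_X|_{D_i}$, which is anti-ample since $-K_X$ is ample on $X$. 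Thus $D_i$ is a smooth Fano variety, settling (1) and completing (2).

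\emph{Eigenvalues of $f|_{D_i}$.} Replacing $f$ by a positive power, arrange $f(D_i)=D_i$ for every $i$. The restriction $r_i\colon\N^1(X)\to\N^1(D_i)$ is $f^*$-equivariant and sends $E_i\mapsto 0$. The classes $\{r_i(E_j)\}_{j\neq i}$ are linearly independent: if $\sum_{j\neq i}c_jE_j|_{D_i}\equiv 0$, intersecting on $D_i$ with $\prod_{l\neq i,\,l\neq k}E_l|_{D_i}$ and pushing back to $X$ kills every summand except the $j=k$ term via $E_j^2\equiv_w 0$, leaving a positive multiple of $c_kE_1\cdots E_n$, whence $c_k=0$. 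So $(f|_{D_i})^*|_{\N^1(D_i)}$ has $\{\lambda_j:j\neq i\}$ among its eigenvalues. Moreover $f|_{D_i}$ is int-amplified: for an ample witness $L$ on $X$ with $f^*L-L$ ample, the restriction $L|_{D_i}$ is ample and $(f|_{D_i})^*(L|_{D_i})-L|_{D_i}=(f^*L-L)|_{D_i}$ is ample. After a further power, \cite[Theorem 1.11]{Men20} ensures every eigenvalue of $(f|_{D_i})^*|_{\N^1(D_i)}$ is a positive integer. Since $D_i$ is Fano of dimension $n-1$ with at least $n-1$ distinct positive eigenvalues, Proposition \ref{prop-Di} applies to $(D_i,f|_{D_i})$ and yields $\rho(D_i)=n-1$, so the eigenvalue set is exactly $\Lambda\setminus\{\lambda_i\}$.

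The principal subtle step is this inductive application of Proposition \ref{prop-Di} to $(D_i,f|_{D_i})$, which requires every eigenvalue of $(f|_{D_i})^*|_{\N^1(D_i)}$ (not merely the ones visibly coming from the restriction of $\N^1(X)$) to be a positive real; this is precisely what the combination of int-amplified restriction to $f$-invariant subvarieties with \cite[Theorem 1.11]{Men20} delivers after passing to a further power.
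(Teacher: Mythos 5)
Your proposal is correct and follows essentially the same route as the paper: extract the eigendivisors from Proposition \ref{prop-Di}, build the Iitaka fibrations $\phi_i$ onto $\mathbb{P}^1$, pick an $f$-periodic smooth fibre for each $D_i$, get Fano via adjunction, and then re-invoke Proposition \ref{prop-Di} on $(D_i,f|_{D_i})$ after passing to a power so that the eigenvalues become positive integers. The only cosmetic differences are that you supply an explicit linear-independence check for the restricted classes $E_j|_{D_i}$ (the paper gets this for free from distinctness of eigenvalues once $D_j|_{D_i}\not\equiv 0$ is established via $D_1\cdots D_n>0$), and you route the integrality of the eigenvalues of $(f|_{D_i})^*$ through ``$f|_{D_i}$ is int-amplified plus \cite[Theorem 1.11]{Men20}'' whereas the paper argues directly that $\Nef(D_i)$ is a rational polyhedron; both are valid. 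You should perhaps record, as the paper does at the outset, that Theorem \ref{main-thm-y} is what lets you invoke Proposition \ref{prop-Di} on $X$ in the first place (it needs $X$ of Fano type), and your one-line uniqueness claim would benefit from spelling out that a competing fibration's general fibre is a $\lambda_i$-eigenvector, hence proportional to $D_i$ by one-dimensionality of the eigenspace — but these are presentational, not mathematical, gaps.
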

\begin{proof}
Since $f$ is int-amplified (cf.\,\cite[Theorem 1.1]{Men20}), $X$ is of Fano type by Theorem \ref{main-thm-y}. 
By Proposition \ref{prop-Di}, $X$ is Fano, $\rho(X)=n$, and there are $n$ base-point-free effective divisors $D_i\neq 0$ such that $f^*D_i\sim \lambda_i D_i$ and $\Nef(X)$ is generated by $D_1, \cdots, D_n$.

We denote by $$\phi_i:X\to Y_i$$  
the Iitaka fibration of $(X,D_i)$.
Then $D_i=\phi_i^*H_i$ for some ample $\mathbb{R}$-Cartier divisor $H_i$ on $Y_i$.
By Proposition \ref{prop-Di}, $\dim(Y_i)=\kappa(X, D_i)=1$.
Since $X$ is rationally connected, $Y_i\cong \mathbb{P}^1$.
For any curve $C$ with $\phi_i(C)$ being a point, 
by the projection formula,
we have 
$$D_i\cdot f_*C=f^*D_i\cdot C=\lambda_i D_i\cdot C=\lambda_i H_i\cdot (\phi_i)_*C=0.$$
Then $f(C)$ is also contracted by $\phi_i$. 
By the rigidity lemma (cf.\,\cite[Lemma 1.15]{Deb01}),  $\phi_i$ is $f$-equivariant and denote by $g_i:=f|_{Y_i}$. 
Note that $g_i$ is then $\lambda_i$-polarized.

Suppose $p_i:X\to Z_i\cong\mathbb{P}^1$ is another $f$-equivariant fibration such that $f|_{Z_i}$ is $\lambda_i$-polarized.
Let $F_i$ be the general fibre of $p_i$.
Then $p_i$ is the Iitaka fibration of $(X,F_i)$ and $f^*F_i\sim \lambda_i F_i$.
Then $F_i$ lies in the extremal ray $R_{D_i}$ and hence $p_i$ and $\phi_i$ are the same up to isomorphism.
We may replace $D_i$ by a general  fibre of $\phi_i$. 
Then $D_i$ is smooth and $f$-periodic.
So (2) is satisfied.

By the adjunction formula, $$-K_{D_i}=-(K_X+D_i)|_{D_i}\sim -K_X|_{D_i}$$
is ample.
So $D_i$ is Fano and (1) is satisfied.

By \cite[Theorem 5.1]{Fak03}, $g_i$ has Zariski dense periodic points.
So we may further replace $D_i$ by an $f$-periodic one.
After a suitable iteration of $f$, our $f|_{D_i}$ is a surjective endomorphism of $D_i$ for each $i$. 
By Proposition \ref{prop-Di}, $D_1\cdots D_n>0$. 
So $D_j|_{D_i}\not\equiv 0$ for $j\neq i$, and we have
$$(f|_{D_i})^*(D_j|_{D_i})\sim \lambda_j D_j|_{D_i}$$
for $j\neq i$.
So $(f|_{D_i})^*|_{\N^1(D_i)}$ has at least $n-1$ distinct real eigenvalues $\{\lambda_1,\cdots,\widehat{\lambda_i},\cdots,\lambda_n\}$.
Further all the eigenvalues of $(f|_{D_i})^*|_{\N^1(D_i)}$ are positive integers after replacing $f$ by a power, since $\textup{Nef}(D_i)$ is a rational polyhedron. So (3) is satisfied by applying Proposition \ref{prop-Di} for $D_i$.
\end{proof}


\begin{proposition}\label{prop-fano}
There are $f$-equivariant Fano contractions
$$\pi_i:X\to X_i,\, 1\le i\le n$$
of $K_X$-negative extremal rays, such that:
\begin{enumerate}
\item The eigenvalues of $(f|_{X_i})^*|_{\N^1(X_i)}$ are $\Lambda\backslash\{\lambda_i\}$.
\item $\pi_i$ is a conic bundle and $X_i$ is a smooth Fano   variety.
\end{enumerate}
\end{proposition}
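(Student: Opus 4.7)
The plan is to realize each $\pi_i$ as the Stein factorization of a suitable product of the fibrations from Proposition \ref{prop-pencil}, identify it with a $K_X$-negative extremal Mori contraction, and invoke Ando's theorem to deduce the conic bundle structure with smooth base. Concretely, I will form
\[
\Phi_i := \prod_{j\neq i}\phi_j\colon X\longrightarrow \prod_{j\neq i}Y_j\cong (\mathbb{P}^1)^{n-1},
\]
and set $L:=\sum_{j\neq i}D_j$, which is the pullback under $\Phi_i$ of an ample class. Using $D_k^2\equiv_w 0$ from Proposition \ref{prop-Di}(3), the multinomial expansion collapses to $L^{n-1}=(n-1)!\,D_1\cdots\widehat{D_i}\cdots D_n$, which is nonzero by Proposition \ref{prop-Di}(4); thus $\Phi_i$ is surjective, and the Stein factorization $\Phi_i=\psi_i\circ\pi_i$ with $\pi_i\colon X\to X_i$ having connected fibers and $\psi_i\colon X_i\to(\mathbb{P}^1)^{n-1}$ finite surjective yields $\dim X_i=n-1$.

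Next I identify $\pi_i$ with a $K_X$-negative extremal Mori contraction. By Proposition \ref{prop-Di}, $\Nef(X)$ is simplicial with generators $D_1,\ldots,D_n$, so $\NE(X)$ has exactly $n$ extremal rays $R_1,\ldots,R_n$ with $R_i=\NE(X)\cap\{C:D_j\cdot C=0\text{ for all }j\neq i\}$; a curve is contracted by $\Phi_i$ precisely when its class lies in $R_i$. Since $-K_X$ is ample by Proposition \ref{prop-Di}(5), $R_i$ is $K_X$-negative and the Cone Theorem produces its Mori contraction, which coincides with $\pi_i$ by the uniqueness of contractions. Because $f^*$ acts diagonally on $\N^1(X)$ with positive eigenvalues $\lambda_j$, the dual $f_*$ preserves each ray $R_i$, so the rigidity lemma makes $\pi_i$ an $f$-equivariant morphism. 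From the identification $\pi_i^*\N^1(X_i)=\operatorname{span}(D_j:j\neq i)\subset\N^1(X)$, one reads off that $(f|_{X_i})^*$ on $\N^1(X_i)$ has eigenvalues exactly $\Lambda\backslash\{\lambda_i\}$, which is (1).

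The main obstacle in verifying (2) is to show that every fiber of $\pi_i$ has dimension at most one, since the weak vanishing $D_i^2\equiv_w 0$ by itself does not rule out higher-dimensional exceptional fibers. The decisive strengthening is to observe that $D_i$, being a smooth fiber of $\phi_i$, equals $\phi_i^*H_i$ for an ample $H_i\in\N^1(\mathbb{P}^1)$, and hence $D_i^2=\phi_i^*(H_i^2)=0$ in $\N^2(X)$, not merely weakly. If an irreducible component $F$ of some fiber of $\pi_i$ had $\dim F\geq 2$, then $\pi_i$-ampleness of $D_i$ (which follows from $D_i\cdot R_i>0$) would force $(D_i|_F)^2>0$, contradicting $D_i^2\cdot[F]=0$. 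With fiber dimensions bounded by one, Ando's theorem on $K_X$-negative extremal contractions of smooth projective varieties then yields that $X_i$ is smooth and $\pi_i$ is either a blow-up along a smooth codimension-two subvariety or a conic bundle; the birational alternative is excluded by $\dim X_i=n-1<n$, so $\pi_i$ is a conic bundle.

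To conclude, $X_i$ is smooth, projective, and rationally connected as an image of the rationally connected $X$, and it admits the int-amplified endomorphism $f|_{X_i}$ with $n-1$ distinct positive real eigenvalues on $\N^1(X_i)$. Theorem \ref{main-thm-y} then ensures $X_i$ is of Fano type, and Proposition \ref{prop-Di} applied to $X_i$ upgrades this to $X_i$ being a Fano variety, completing the proof of (2).
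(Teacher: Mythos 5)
Your proof is correct, and it takes a genuinely different route to the two key geometric facts that feed into Ando's theorem. The paper locates the extremal ray $R_i$ via an irreducible curve $C_i$ inside $D_1\cap\cdots\cap\widehat{D_i}\cap\cdots\cap D_n$, obtains $\pi_i$ from the cone theorem, rules out the birational case using Lemma \ref{lem-minimal} together with $\Nef(X)=\PE^1(X)$, and then derives both $\dim X_i=n-1$ and equi-dimensionality of $\pi_i$ in one stroke by showing the auxiliary map $\psi\colon X\to X_i\times Y_i$ is finite and surjective. You instead build $\pi_i$ directly as the Stein factorization of $\Phi_i=\prod_{j\neq i}\phi_j$, which makes $\dim X_i=n-1$ automatic, and then bound the fiber dimension of $\pi_i$ by the clean observation that $D_i^2=0$ \emph{as a cycle class} (not merely weakly, since $D_i$ was replaced in Proposition \ref{prop-pencil} by a fiber of $\phi_i$, so two distinct fibers are disjoint), which is incompatible with $\pi_i$-ampleness of $D_i$ on a fiber component of dimension $\ge 2$. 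Both routes are valid; yours trades the paper's $\psi$-finiteness argument for a local intersection-theoretic one, and avoids the explicit invocation of $\Nef(X)=\PE^1(X)$. Two cosmetic remarks: when $\dim F=k>2$ the precise contradiction is $(D_i|_F)^k=D_i^k\cdot[F]=0$ versus $(D_i|_F)^k>0$ (rather than $(D_i|_F)^2$, which is not a number when $k>2$); and the statement that $L^{n-1}$ "is nonzero" is really the assertion $L^{n-1}\cdot H>0$ for an ample $H$, giving numerical dimension $n-1$ and hence surjectivity of $\Phi_i$. Neither affects the argument.
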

\begin{proof}
We apply Proposition \ref{prop-pencil} and use the same notation there.
First note that $\sum\limits_{i=1}^n D_i$ is ample and 
$$(\sum\limits_{i=1}^n D_i)^n=(n!)D_1\cdots D_n>0.$$
So we have
$$D_1\cap\cdots\cap\widehat{D_i}\cap\cdots \cap D_n\neq\emptyset.$$
Let $C_i$ be an irreducible curve of $D_1\cap\cdots\cap\widehat{D_i}\cap\cdots \cap D_n$.
By Proposition \ref{prop-Di}, $X$ is Fano. Then we have
$$K_X\cdot C_i<0 \text{ and } D_j\cdot C_i=0$$ 
for $j\neq i$  (recall that $D_i^2\equiv_w 0$).
Since the space spanned by nef $D_1,\cdots,\widehat{D_i},\cdots, D_n$ is an $f^*$-invariant hyperplane of $\N^1(X)$, 
its dual space is $f_*$-invariant 1-dimensional and contains $R_{C_i}$ as an extremal ray in $\NE(X)$.
Therefore, $R_{C_i}$ induces an $f$-equivariant contraction (cf.\,\cite[Theorem 3.7]{KM98})
$$\pi_i:X\to X_i$$ 
for $1\le i\le n$.
By  Proposition \ref{prop-Di} (5), $\textup{Nef}(X)=\textup{PE}^1(X)$.
So it follows from Lemma \ref{lem-minimal} that $\pi_i$ is a Fano contraction, i.e., $\dim X_i<\dim X$.
By the cone theorem (cf.\,\cite[Theorem 3.7]{KM98}), 
for any $j\neq i$, $D_j=\pi_i^*L_j$ for some Cartier divisor $L_j$ on $X_i$.
Then $(f|_{X_i})^*L_j\equiv \lambda_j L_j$. 
By Proposition \ref{prop-Di}, $\rho(X)=n$ and hence
$$\rho(X_i)=\rho(X)-1=n-1.$$
So the  set of eigenvalues of $(f|_{X_i})^*|_{\N^1(X_i)}$ contains $\Lambda\backslash\{\lambda_i\}$ and hence coincides with it (cf.\,Proposition \ref{prop-Di}). 
This implies  (1).

Consider the morphism 
$$\psi: X\to X_i\times Y_i$$
induced from $\pi_i:X\to X_i$ and $\phi_i:X\to Y_i$ (cf.\,Proposition \ref{prop-pencil}).
Denote by $p_1: X_i\times Y_i\to X_i$ and $p_2:  X_i\times Y_i\to Y_i$ the two projections.
Let $$H:=p_1^*(\sum_{j\neq i}L_j)+p_2^*(\phi_i(D_i))$$
be a Cartier divisor on $X_i\times Y_i$.
Then $\psi^*H=\sum\limits_{i=1}^n D_i$ is ample and hence $\psi$ contracts no curve of $X$.
Note that $$\dim(X_i\times Y_i)=\dim(X_i)+1\le \dim(X).$$
So $\psi$ is a finite surjective morphism and we have 
$$\dim(X_i)=\dim(X)-1.$$
Now both $\psi$ and $p_1$ are equi-dimensional.
Then $\pi_i=p_1\circ \psi$ is also equi-dimensional.
By \cite[Theorem 3.1]{And85}, $\pi_i$ is a conic bundle and $X_i$ is smooth.
Note that $X_i$ is rationally connected.
Applying Proposition \ref{prop-Di} together with (1) for $X_i$, we further see that $X_i$ is a Fano variety. 
So (2) is proved.
\end{proof}

\begin{proof}[Proof of Theorem \ref{thm_rc_product}]
One direction is simple, for example we may take $$f=f_1\times\cdots\times f_n,$$ where $f_i([a:b])=[a^{i+1}:b^{i+1}]$.

Now we consider another direction and show by induction on $n=\dim(X)$. It is trivial if $n=1$.
If $n=2$, by Proposition \ref{prop-Di}, $X$ is a smooth Fano surface of Picard number 2 with $\Nef(X)=\PE^1(X)$.
Then $X\cong \mathbb{P}^1\times \mathbb{P}^1$.
From now on, suppose Theorem \ref{thm_rc_product} holds for $n-1$ with $n\ge 3$.

We use the same notation as in Propositions \ref{prop-pencil} and \ref{prop-fano}.
By induction, we have
$$X_i\cong D_i\cong (\mathbb{P}^1)^{\times (n-1)}.$$
Note that $f|_{X_i}$ splits (cf.\,Remark \ref{rmk-split}) and for $j\neq i$,
there are $f$-equivariant natural projections 
$$p_j:X_i\cong (\mathbb{P}^1)^{\times (n-1)}\to Z_j\cong\mathbb{P}^1$$
such that $f|_{Z_j}$ is $\lambda_j$-polarized.
We may assume $Z_j=Y_j$ and $p_j\circ \pi_i=\phi_j$ by the uniqueness property in Proposition \ref{prop-pencil}.
Then $D_1\cap\cdots\cap\widehat{D_i}\cap\cdots\cap D_n$ intersects transversally and is a general fibre $\ell_i$ of $\pi_i$.
In particular, we have
$$d:=D_1\cdots D_n=D_i\cdot \ell_i$$
and $D_i\cdot \ell_j=0$ for $i\neq j$ since $D_i^2\equiv_w 0$.
Note that $K_X\cdot \ell_i=-2$.
Since $D_1,\cdots,D_n$ is a basis for $\N^1(X)$, we may write 
$$K_X\equiv\sum_{i=1}^na_iD_i$$
for some rational numbers $a_i$. 
Intersecting the above numerical equivalence with $\ell_i$, we have $a_i=-2/d$ for each $i$ and thus 
$$-dK_X\equiv 2\sum_{i=1}^nD_i.$$
Therefore,
$$(-dK_X)^{n-1}\cdot D_1= 2^{n-1}(n-1)!d.$$
From another aspect, we apply the adjunction formula
$$K_{D_1}=(K_X+D_1)|_{D_1}=K_X|_{D_1}$$ 
and note that
$$(-K_{D_1})^{n-1}=2^{n-1}(n-1)!$$
since $D_1\cong (\mathbb{P}^1)^{\times (n-1)}$.
Then 
$$(-dK_X)^{n-1}\cdot D_1=(-dK_{D_1})^{n-1}=2^{n-1}(n-1)!d^{n-1}$$
Therefore, $d^{n-1}=d$ and hence $d=1$ since we assumed $n\ge 3$.

Consider the morphism 
$$\psi: X\to X_1\times Y_1\cong (\mathbb{P}^1)^{\times n}$$
induced from $\pi_1:X\to X_1$ and $\phi_1:X\to Y_1$.
Note that 
$$\deg \psi=D_1\cdot \ell_1=d=1.$$
So the theorem is proved.
\end{proof}

\end{document}